\DeclareFontFamily{OT1}{rsfs}{}
\DeclareFontShape{OT1}{rsfs}{n}{it}{<-> rsfs10}{}
\DeclareMathAlphabet{\mathscr}{OT1}{rsfs}{n}{it}
 \DeclareMathOperator{\tr}{tr}
 \DeclareMathOperator{\Li}{Li}
 \DeclareMathOperator{\Gal}{Gal}
 \DeclareMathOperator{\GL}{GL}
 \newcommand{\mymod}[1]{(\operatorname{mod} #1)}
\def\q{\mathfrak{q}}
\def\p{\mathfrak{p}}
\def\a{\mathfrak{a}}
\def\m{\mathfrak{m}}
\def\c{\mathfrak{c}}
\def\norm{{\rm N}}       % norm
\let\@@pmod\pmod
\DeclareRobustCommand{\pmod}{\@ifstar\@pmods\@@pmod}
\def\@pmods#1{\mkern4mu({\operator@font mod}\mkern 6mu#1)}
\newtheorem{theorem}{Theorem}
 \newtheorem{corollary}{Corollary}[theorem]
   \theoremstyle{remark}
 \newtheorem*{remark}{Remark}
\begin{document}

\title[Square-free orders for CM elliptic curves modulo $p$]{Square-free orders for CM elliptic curves modulo $p$ in short intervals}

\dedicatory{In memory of my friend, Richard}

\author{Peng-Jie Wong}
\address{Department of Applied Mathematics\\
National Sun Yat-Sen University\\
Kaohsiung City, Taiwan}
\email{pjwong@math.nsysu.edu.tw}
\subjclass[2010]{11N45, 11G05, 11N36} 
% 11N45  	Asymptotic results on counting functions for algebraic and topological structures
% 11G05  	Elliptic curves over global fields
% 11N36  	Applications of sieve methods
\thanks{Statements and Declarations: The author was an NCTS postdoctoral fellow, and he was also supported by a PIMS postdoctoral fellowship and the University of Lethbridge during conducting part of this research. He is currently supported by the NSTC grant 114-2628-M-110-006-MY4.}

\keywords{CM elliptic curves, square-free orders, the cyclicity problem}
\date{}

\begin{abstract}
Let $E$ be a CM elliptic curve over $\Bbb{Q}$. We refine the work of Cojocaru on the asymptotic formulae for the number of primes $p\le x$ for which the reduction modulo $p$ of $E$ is of square-free order. Also, we derive an unconditional short interval variant for the asymptotics. Compared to the estimate derived from the generalised Riemann hypothesis, the presented result is valid for even shorter intervals. Furthermore, we improve the short interval variant of the cyclicity problem for CM elliptic curves previously obtained by the author.
\end{abstract}

\maketitle

\section{Introduction}

Let $E$ be an elliptic curve defined over $\Bbb{Q}$, and let $N_E$ be its conductor. For a prime $p$ of good reduction, we let $\bar{E}$ be the reduction of $E$ modulo $p$ and $\bar{E}(\Bbb{F}_p)$ be the group of rational points of $\bar{E}$ over $\Bbb{F}_p$. By Hasse's bound, there is an integer $a_p$ such that $|a_p|\le 2 \sqrt{p}$ and $|\bar{E}(\Bbb{F}_p)|=p+1-a_p$. For $p\ge 5$, $p$ is called supersingular if $a_p=0$; otherwise, $p$ is ordinary.

Since Lang and  Trotter \cite{LT} formulated an elliptic curve analogue of Artin's primitive root conjecture, the study of the structure of $\bar{E}(\Bbb{F}_p)$, as $p$ varies, has attracted many mathematicians. For instance,  Cojocaru \cite{Co08} determined an asymptotic formula for
\begin{equation}\label{Co-01}
h_E(x,\Bbb{Q})=\#\{p \le x \mid \text{$p\nmid N_E$, $a_p\neq 0$, and $|\bar{E}(\Bbb{F}_p)|$ is square-free}\}
\end{equation}
when $E$ has complex multiplication (CM) by the full ring of integers $\mathcal{O}_K$ of an imaginary quadratic field $K$.  More precisely, she showed that 
\begin{equation}\label{Co-02}
h_E(x,\Bbb{Q})=\delta_E \Li(x) +  O\Big( \frac{x}{(\log x)(\log\log ((\log x)/N_E^2))}\frac{\log\log x}{\log ((\log x)/N_E^2)}\Big),
\end{equation}
where $\Li(x) =\int_2^x \frac{dt}{\log t}$ is the logarithmic integral function,
\begin{equation}\label{delta}
\delta_E=\frac{1}{2} \sum_{\a,m}\frac{\mu(\a)\mu(m)}{[K(E[\a^2])K(E[m]):K]},
\end{equation}
the sum is over square-free integral ideals $\a$  of $\mathcal{O}_K$ composed of degree-one unramified prime ideals and over square-free $m\in \Bbb{N}$, and $\mu(\a)$ is the number field analogue of the M\"obius function for $K$. Here, as later, $E[m]$ (resp., $E[\a]$) denotes the group of $m$-torsion points (resp., the group of $\a$-torsion points) of $E$.
Moreover, under the generalised Riemann hypothesis (GRH) for the Dedekind zeta functions of the division fields of $E$, Cojocaru established
\begin{equation}\label{Co-02-GRH}
h_E(x,\Bbb{Q})=\delta_E \Li(x) +   O( x^{5/6}(\log x)^2(\log (N_E x))^{1/3}).
\end{equation}
Recall that as $E$ is defined over $\Bbb{Q}$, the field $K$ is one of the nine imaginary quadratic fields of class number 1. It worth remarking that by \cite[Theorem 1.3]{Co08}, one knows $\delta_E>0$ whenever $K$ is $\Bbb{Q}(\sqrt{-11})$,  $\Bbb{Q}(\sqrt{-19})$,  $\Bbb{Q}(\sqrt{-43})$,  $\Bbb{Q}(\sqrt{-67})$, or  $\Bbb{Q}(\sqrt{-163})$. 

One of the objectives of this article is to prove the following improved estimates for $h_E(x,\Bbb{Q})$.

\begin{theorem}\label{main-thm1}
Let $E/\Bbb{Q}$ be an  elliptic curve  of conductor $N_E$ and with complex multiplication by the  ring of integers $\mathcal{O}_K$ of an imaginary quadratic field $K$.  Then  for any  $A>0$,  we have
\begin{align}\label{main-1}
h_E(x,\Bbb{Q})=\delta_E \Li(x)+O_A\Big(N_E\frac{x}{(\log x)^A}\Big),
\end{align}
where $\delta_E$ is defined as in \eqref{delta}. 

Moreover, assuming  GRH, for any $\eta\in(0, \frac{1}{3})$, we have 
\begin{equation}\label{improved-hQ}
h_E(x,\Bbb{Q}) =\delta_E \Li(x) +
O_\eta \Big( x^{5/6}  \frac{(\log(N_E x ) )^{1/3} }{ (\log x)^{2/3}}\Big)
\end{equation}
 uniformly in $N_E\le x^\eta$. 
\end{theorem}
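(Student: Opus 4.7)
The plan is to refine Cojocaru's Möbius inversion framework from \cite{Co08}. Starting from $\mu^2(n)=\sum_{d^2\mid n}\mu(d)$ and the CM factorisation of the divisibility $d^2\mid |\bar E(\mathbb{F}_p)|$ into an ideal part in $\O_K$ and a rational part, one obtains
\[
h_E(x,\Q)=\tfrac12\sum_{\a,m}\mu(\a)\mu(m)\,\pi_{\a,m}(x)+O(1),
\]
summed over $(\a,m)$ as in \eqref{delta}, where $\pi_{\a,m}(x)$ counts primes $p\le x$ of good ordinary reduction that split completely in $L_{\a,m}:=K(E[\a^2])K(E[m])$. As $E$ has CM by $\O_K$, the field $L_{\a,m}$ lies in a ray class field of $K$ of conductor dividing $N_E\cdot \a^2 m \O_K$, so $\pi_{\a,m}(x)$ can be analysed through Hecke $L$-functions of $K$; the degree bound $[L_{\a,m}:K]\gg (N(\a)m)^{2-\varepsilon}$ ensures absolute convergence of \eqref{delta}.

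For \eqref{main-1}, fix $A>0$ and truncate at $N(\a)m\le y:=(\log x)^{B}$ with $B=B(A)$ large. The Siegel--Walfisz theorem for Hecke $L$-functions over $K$ (uniform in moduli up to $(\log x)^{B}$, with polynomial conductor dependence) gives
\[
\pi_{\a,m}(x)=\frac{\Li(x)}{[L_{\a,m}:K]}+O_A\Big(\frac{N_E\, x}{(\log x)^{A+B+2}}\Big)
\]
for $N(\a)m\le y$. Summing, the main term $\delta_E\Li(x)$ emerges once the tail $\sum_{N(\a)m>y}[L_{\a,m}:K]^{-1}\ll y^{-1+\varepsilon}$ of the defining series is discarded. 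The complementary range $N(\a)m>y$ is handled by a Brun--Titchmarsh-type upper bound --- the condition $N(\a)m\mid |\bar E(\mathbb{F}_p)|$ pins the Frobenius $\pi_p$ to a residue class modulo $\a^2 m$ --- and contributes $O(x/y)=O(x/(\log x)^{B})$; choosing $B\ge A$ yields \eqref{main-1}.

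For \eqref{improved-hQ}, the unconditional prime-counting input is replaced by the Lagarias--Odlyzko bound under GRH,
\[
\pi_{\a,m}(x)=\frac{\Li(x)}{[L_{\a,m}:K]}+O\Big(\sqrt{x}\bigl(\log|d_{L_{\a,m}}|+n_{L_{\a,m}}\log x\bigr)\Big),
\]
together with $\log|d_{L_{\a,m}}|\ll n_{L_{\a,m}}\log(N_E N(\a)m)$ from the conductor--discriminant formula. Splitting at $N(\a)m\le T$, the accumulated GRH error is $\ll \sqrt{x}\,T^{\kappa}\log(N_E x)$ for a small power $\kappa$, while the tail is treated via a Brun--Titchmarsh bound sharpened to extract the correct denominator $\log(x/d^{2})$, yielding $\ll (x/T)\,\bigl(\log(N_E x)\bigr)^{1/3}(\log x)^{-2/3}$ after partial summation against $[L_{\a,m}:K]^{-1}$. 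Balancing at $T\asymp x^{1/6}$ (with a logarithmic correction) produces the claimed $x^{5/6}$ error; the hypothesis $N_E\le x^{\eta}$ with $\eta<\tfrac13$ is precisely what keeps the $\log N_E$ contribution absorbed into the final error.

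The main obstacle is the logarithmic refinement in part (2): the factor $(\log x)^{-2/3}$, which replaces Cojocaru's $(\log x)^{+2}$, forces one to apply Brun--Titchmarsh with its sharp $\log(x/d^{2})$ denominator rather than a crude $\log x$ in the tail, and to perform a careful partial summation on $d=N(\a)m$ instead of a dyadic maximum. In part (1), the comparable technical ingredient is securing a Siegel--Walfisz theorem for Hecke $L$-functions with explicit polynomial dependence on the conductor, which is what produces the linear $N_E$-factor in the stated error.
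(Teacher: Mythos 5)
Your overall architecture is sound and, for the GRH part, you have put your finger on exactly the paper's new ingredient: replacing the crude treatment of the post-truncation range by a Brun--Titchmarsh bound over $K$ with the sharp denominator $\log(x/\norm(\q))$ (the paper uses the Hinz--Lodemann inequality, Theorem \ref{HLBT}) and then balancing at $x^{1/6}$. For the unconditional part you take a genuinely different route: you feed each modulus individually into a Siegel--Walfisz theorem for Hecke characters, whereas the paper translates the Chebotarev sets $D_{m^2}$ into unions of ray classes modulo $\mathfrak{f}m^2$ via Artin reciprocity and then applies Huxley's Bombieri--Vinogradov theorem for number fields, treating the moduli on average. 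Your route can be made to work, but only after a repair: Siegel--Walfisz for ray class characters does not carry ``polynomial conductor dependence''; it gives an (ineffective) error $O_B(x\exp(-c\sqrt{\log x}))$ only for moduli of norm at most $(\log x)^B$, and the modulus here is $\mathfrak{f}\a^2m$ with $\norm(\mathfrak{f})\asymp N_E$. You must therefore first observe that \eqref{main-1} is trivial unless $N_E\le(\log x)^{A-1}$ and restrict to that range; the factor $N_E$ in the final error then comes from the number of ray classes (equivalently, of pairs $(\a,m)$) being summed, not from the individual equidistribution estimate. The Bombieri--Vinogradov route avoids this issue entirely, since it tolerates moduli up to $x^{1/2-\epsilon}$.

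The more serious gap is at the top of the range in your GRH argument. Any Brun--Titchmarsh bound, including Hinz--Lodemann's, requires the modulus norm to be smaller than $x$, and the denominator $\log(x/\norm(\q))$ degenerates as $\norm(\q)\to x$. By Hasse's bound the inclusion--exclusion index runs up to $2\sqrt{x}$, so the modulus norm ($N_Em^4$ in the paper's parametrization, roughly $N_Ed^2$ in yours) eventually exceeds $x$, and your partial summation against $\log(x/d^2)^{-1}$ blows up there. This is precisely why the paper uses the three-range splitting \eqref{hE-expression-finer}: the initial range $m\le y$ via effective Chebotarev under GRH; the middle range $y<m\le x^{\theta}$, with $\theta<\tfrac14$ and $N_E\le x^{1-4\theta-\epsilon}$ so that $\norm(\mathfrak{fm^2})\le x^{1-\epsilon}$, via Hinz--Lodemann; and the top range $x^{\theta}<m\le 2\sqrt{x}$ via Cojocaru's lattice-point count of the generators $\alpha$ with $(\pi_p-1)=m_im_rI(m_s)\cdot(\alpha)$, which costs only $x^{1-\theta}(\log x)^3+\sqrt{x}(\log x)^4$ and is harmless once $\theta=\tfrac16+\epsilon$. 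Your two-range scheme must be amended accordingly; once it is, your balancing does reproduce \eqref{improved-hQ}. (Minor points: the error in your opening identity is not $O(1)$ --- degree-two and ramified primes of $K$ contribute $O(\sqrt{x}y/\log x+y\log(yN_E))$ --- and the intermediate tail bound should read $x/(T\log x)$, the stated logarithmic factors appearing only after the choice of $T$.)
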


\begin{remark}
For general elliptic curves $E/\Bbb{Q}$, one may instead consider
$$
\pi_E^{\mathrm{SF}} (x) = \#\{ p\le x \mid  \text{$p\nmid N_E$  and $|\bar{E}(\Bbb{F}_p)|$ is square-free} \}.
$$
For $E$ with CM, the methods used in \cite{Co08} and this article can be adapted to study $\pi_E^{\mathrm{SF}} (x)$. When $E$ is non-CM, Cojocaru \cite{Co02} showed how to derive an asymptotic formula for $\pi_E^{\mathrm{SF}} (x)$  under the generalized Riemann hypothesis, Artin's holomorphy conjecture, and the pair correlation conjecture. However, the precise asymptotic of $\pi_E^{\mathrm{SF}} (x)$ is still not known unconditionally for this case. Nonetheless, in \cite{Ge}, Gekeler proved that the  conjectural asymptotic for $\pi_E^{\mathrm{SF}} (x)$ holds on average over elliptic curves $E$. Moreover, in \cite{ADHT}, Akhtari, David, Hahn, and Thompson unconditionally established the predicted upper bound for $\pi_E^{\mathrm{SF}} (x)$ and showed that the average results are compatible with the conjectural asymptotics at the level of the constants. %We refer the interested reader to \cite{ADHT} for more discussions.
\end{remark}

It is worth mentioning that in \cite{Co08}, after establishing \eqref{Co-02} and \eqref{Co-02-GRH}, Cojocaru studied an elliptic curve analogue of Linnik's problem as follows. Denoting $p_E$ the smallest ordinary prime $p$ for which $|\bar{E}(\Bbb{F}_p)|$ is square-free, she showed that if $\delta_E\neq 0$, then $p_E=O(\exp(e N_E^{3}))$, unconditionally, and $p_E=O_{\epsilon}((\log N_E)^{2+\epsilon})$  under GRH. In light of her work, we prove the following refined unconditional bound for  $p_E$.

\begin{corollary}
Let $\epsilon\in(0,1)$. If $\delta_E\neq 0$, then
$$
p_E=O_{\epsilon}(\exp(N_E^{\epsilon})).
$$
\end{corollary}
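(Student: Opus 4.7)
The plan is to apply Theorem~\ref{main-thm1} at a point $x$ of size $\exp(N_E^\epsilon)$ and to verify that the asymptotic \eqref{main-1} forces $h_E(x,\Q) \geq 1$, which immediately yields $p_E \leq x$. Concretely, since $\Li(x) \sim x/\log x$, the main term $\delta_E \Li(x)$ dominates the error $O_A(N_E x/(\log x)^A)$ as soon as $(\log x)^{A-1} \gg N_E/\delta_E$, so my strategy is to take $A = A(\epsilon)$ large and then choose $x$ accordingly.

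To turn this into the bound $\log x \ll N_E^\epsilon$, it remains to secure a lower bound for $\delta_E$ in terms of $N_E$. This should follow from the Euler product structure underlying \eqref{delta}: the factors at primes $p \nmid N_E$ contribute a positive constant depending only on $K$, while each local factor at a prime $p \mid N_E$ is bounded below by a fixed negative power of $p$. Hence $\delta_E \gg_K \rad(N_E)^{-c} \geq N_E^{-c}$ for some absolute $c>0$ whenever $\delta_E \neq 0$. The dominance condition then becomes $(\log x)^{A-1} \gg N_E^{c+1}$, so for $\epsilon \in (0,1)$ I would choose $A$ with $\epsilon(A-1) > c+1$ and take $x = \exp(C_\epsilon N_E^\epsilon)$ with $C_\epsilon$ sufficiently large. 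This gives $p_E \leq \exp(C_\epsilon N_E^\epsilon)$, which is the claim.

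The real content is already packaged in Theorem~\ref{main-thm1}; this corollary is a direct extraction. The gain over Cojocaru's unconditional $p_E = O(\exp(eN_E^3))$ arises from replacing her $\log\log$-type denominator in \eqref{Co-02} by an arbitrarily high power of $\log x$, which lets $x$ shrink from doubly exponentially large in $N_E$ to merely $\exp(N_E^\epsilon)$. The only conceptual step outside Theorem~\ref{main-thm1} itself is the lower bound $\delta_E \gg N_E^{-c}$, and I expect this to be the place where some care is required; given the explicit form of \eqref{delta}, however, it should amount to a routine Euler product computation and not a genuine obstacle.
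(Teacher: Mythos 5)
Your overall strategy is exactly the paper's: apply Theorem \ref{main-thm1} with $A=A(\epsilon)$ large, set $x\asymp\exp(N_E^{\epsilon})$, and check that $\delta_E\Li(x)$ dominates the error $O_A(N_Ex/(\log x)^A)$, so that $h_E(x,\Bbb{Q})>0$ and hence $p_E\le x$. That part is fine, and you correctly identify that the whole argument hinges on a quantitative lower bound for $\delta_E$ in terms of $N_E$ (the hypothesis $\delta_E\neq 0$ alone gives no uniformity as $E$ varies).

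The gap is precisely at that lower bound. You assert $\delta_E\gg_K \rad(N_E)^{-c}$ via ``the Euler product structure underlying \eqref{delta},'' but \eqref{delta} is a double sum over ideals $\a$ and integers $m$ with the degree of the compositum $K(E[\a^2])K(E[m])$ in the denominator; it does not obviously factor as an Euler product, and the entanglement of division fields (controlled only up to the conductor $\mathfrak{f}$ of the associated Hecke character) is exactly where the difficulty lies. Indeed, the positivity of $\delta_E$ is itself delicate: Cojocaru establishes $\delta_E>0$ only for five of the nine class-number-one fields $K$, and if a putative local factor could vanish or be anomalously small at a prime dividing $N_E$, your claimed bound would fail. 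So ``routine Euler product computation'' is not a proof. The needed input is available and is in fact stronger than what you claim: Cojocaru proves $\delta_E\gg 1/\log\log N_E$ whenever $\delta_E\neq 0$ (see \cite[Sec.~7]{Co08}), and this is what the paper cites. Substituting that bound (with, say, $A=2/\epsilon$, so that the error at $x=\exp(N_E^{\epsilon})$ is $O(x/N_E)$ against a main term $\gg x/(N_E^{\epsilon}\log\log N_E)$) closes the argument. With that citation in place of your Euler-product sketch, your proof is correct and coincides with the paper's.
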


\begin{proof}
For $\epsilon\in(0,1)$, we apply Theorem \ref{main-thm1} with $A= 2/\epsilon$ to obtain
$$
h_E(x,\Bbb{Q})=\delta_E \Li(x) +O_{\epsilon}\Big(N_E \frac{x}{(\log x)^{2/\epsilon}}\Big).
$$
As $\delta_E\gg \frac{1}{\log\log N_E}$ (see \cite[Sec. 7]{Co08}), if we choose $x= \exp(N_E^{\epsilon})$, then $h_E(x,\Bbb{Q})>0$  for $N_E$ sufficiently large with respect to $\epsilon$.  Thus, there is an ordinary prime $p_E=p = O_{\epsilon}(\exp(N_E^{\epsilon}))$  such that $|\bar{E}(\Bbb{F}_p)|$ is square-free.
\end{proof}

Furthermore, as may be noticed, the estimates \eqref{Co-02},  \eqref{Co-02-GRH}, \eqref{main-1}, and \eqref{improved-hQ}  present an  ``elliptic'' analogue of the prime number theorem. Indeed,  a strong form of the prime number theorem states that for any $A>0$, one has
\begin{align}\label{PNT}
\pi(x)  =\Li(x)+O\Big(\frac{x}{(\log x)^A}\Big);
\end{align}
under the Riemann hypothesis, one has
\begin{align}\label{RH}
\pi(x)  =\Li(x)+O(x^{1/2}\log x).
\end{align}
In a slightly different vein, as \eqref{PNT} gives
$$
\pi(2x)-\pi(x)  =\Li(2x)- \Li(x)+O\Big(\frac{x}{(\log x)^A}\Big),
$$
one may ask for an asymptotic formula of the distribution of primes in short intervals  when  $(x,2x]$  is replaced by $(x,x+h]$ for some $x^{1-\delta}\le h  \le x$ with $\delta\in [0,1)$. 
Under the Riemann hypothesis, \eqref{RH} implies that for any $A>0$,
\begin{align}\label{RH-s}
\pi(x+h)-\pi(x)  =\Li(x+h)- \Li(x)+O\Big(\frac{h}{(\log x)^A}\Big)
\end{align}
whenever $x^{1-\delta}\le h  \le x$ with $\delta\in [0,\frac{1}{2})$.    Although the Riemann hypothesis is still out of reach, there are some progresses towards \eqref{RH-s}. For example, Huxley \cite{Hu71} showed that \eqref{RH-s} is valid for $x^{1-\delta}\le h  \le x$ with $\delta\in [0,\frac{5}{12})$.
From the above discussion, one may further consider
\begin{equation}\label{def-h-SI}
h_E(x,h,\Bbb{Q})=\#\{x< p \le x +h \mid \text{$p\nmid N_E$, $a_p\neq 0$, and $|\bar{E}(\Bbb{F}_p)|$ is square-free}\}.
\end{equation}
By the virtue of \eqref{main-1}, for any $A>0$, we have
\begin{equation*}
h_E(x,x,\Bbb{Q})  =\delta_E (\Li(2x)- \Li(x))+ O_A\Big( N_E\frac{x}{(\log x)^A}\Big).
\end{equation*}
Furthermore, by \eqref{Co-02-GRH}, under GRH, for any $x^{1-\delta}\le h  \le x$ with $\delta\in[0,\frac{1}{6})$, one has
\begin{equation}\label{square-free-GRH}
h_E(x,h,\Bbb{Q})  =\delta_E (\Li(x+h)- \Li(x))+ O\Big((\log N_E)^{1/3} \frac{h}{(\log x)^A}\Big)
\end{equation}
for any $A>0$. However, it is apparent that we are far from reaching GRH, which leads one to ask whether an unconditional version of \eqref{square-free-GRH} can be obtained. In this article,  we shall prove the following estimate for $h_E(x,h,\Bbb{Q})$, which is not only unconditional but also valid for shorter intervals compared to   \eqref{square-free-GRH}.

\begin{theorem}\label{main-thm2}
Let $E/\Bbb{Q}$ be an  elliptic curve  of conductor $N_E$ and with complex multiplication by the  ring of integers $\mathcal{O}_K$ of an imaginary quadratic field $K$. Let $A> 0$ and  $0\le \delta<\frac{1}{5}$. Then 
for any  $x^{1-\delta}\le h\le x$,  we have
\begin{align*}
h_E(x,h,\Bbb{Q})
=\delta_E (\Li(x+h)-\Li(x))+O_A\Big( N_E\frac{h}{(\log x)^A}\Big),
\end{align*}
where $\delta_E$ is defined as in \eqref{delta}. 
\end{theorem}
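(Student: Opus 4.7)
The strategy is to follow the M\"obius-sieve proof of Theorem \ref{main-thm1} with the effective prime number theorem for Dedekind zeta functions replaced by its short-interval analogue. Using the CM-specific identity underlying \eqref{delta}, I would first write
\begin{equation*}
h_E(x,h,\Q) = \frac{1}{2}\sum_{\substack{N(\a)\le Y \\ m \le Y}} \mu(\a)\mu(m)\bigl(\pi_{L_{\a,m}}(x+h) - \pi_{L_{\a,m}}(x)\bigr) + \mathcal{E}(x,h;Y),
\end{equation*}
where $L_{\a,m} = K(E[\a^2])K(E[m])$, $\pi_L(t)$ counts rational primes at most $t$ that are unramified and split completely in $L/\Q$, and $Y$ is a truncation parameter. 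The truncation error $\mathcal{E}(x,h;Y)$ is controlled by the same Brun--Titchmarsh-type upper bounds used for Theorem \ref{main-thm1}, since the relevant inequalities for $p$ with $\ell^2 \mid |\bar E(\Bbb{F}_p)|$ are insensitive to whether the counting is done over $[1,x]$ or over a short window of comparable length; taking $Y$ to be a small power of $\log x$ makes this contribution $O_A(N_E h/(\log x)^A)$.

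For the main term I would exploit that each division field $L_{\a,m}$ is abelian over $K$, so the prime-counting function decomposes as a sum over primitive Hecke characters of $K$ with conductors bounded polynomially in $N(\a)$, $m$, and $N_E$. An unconditional short-interval prime number theorem for Hecke $L$-functions---obtainable from zero-density estimates in the style of Huxley \cite{Hu71}, combined with the classical Vinogradov--Korobov zero-free region---then yields
\begin{equation*}
\pi_{L_{\a,m}}(x+h) - \pi_{L_{\a,m}}(x) = \frac{\Li(x+h)-\Li(x)}{[L_{\a,m}:\Q]} + O_A\!\Bigl(\frac{h}{(\log x)^A}\Bigr)
\end{equation*}
uniformly for $\a, m$ in the truncation range and all $x^{1-\delta}\le h\le x$ with $\delta<1/5$. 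Combining this with the truncation error and recognising the truncated M\"obius sum as $\delta_E + O_A((\log x)^{-A})$ then produces the claimed asymptotic.

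The principal difficulty is the uniform short-interval Hecke prime number theorem: one needs the zero-density machinery to hold with sufficient $T$-aspect and conductor-aspect uniformity to permit the exponent $\delta$ to approach $1/5$, and the value $1/5$ should emerge from balancing the admissible zero-density exponent against the polynomial growth of the conductor of $L_{\a,m}$. This is the step that genuinely exploits the CM hypothesis, since in the non-CM case the relevant $L$-functions are non-abelian and comparable unconditional short-interval results are not available. Once this input is in hand, the bookkeeping for summing against $\tfrac{1}{2}\mu(\a)\mu(m)$ and recombining with the truncation contribution proceeds in direct parallel with Theorem \ref{main-thm1}.
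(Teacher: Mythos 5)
Your skeleton matches the paper's: a M\"obius sieve truncated at a polylogarithmic level, an unconditional short-interval prime-counting result for abelian extensions of $K$ on the initial range, and a Brun--Titchmarsh-type bound on the tail. But the two steps you treat as routine are precisely where the content of the proof lies, and one of them is a genuine gap. You assert that the upper bounds for primes $p$ with $m^2\mid|\bar{E}(\Bbb{F}_p)|$ are ``insensitive to whether the counting is done over $[1,x]$ or over a short window.'' They are not: the long-interval bound \eqref{(36)-Co08} gives $\mathcal{M}(x,y,2\sqrt{x})\ll \frac{x}{y}(\log x)^3+\sqrt{x}(\log x)^4$, and with $y$ a power of $\log x$ the term $\frac{x}{y}(\log x)^3$ is of order $x/(\log x)^{A}$, which swamps the short-interval main term $\asymp h/\log x$ as soon as $h=x^{1-\delta}$ with $\delta>0$. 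What is actually needed is that for each square-free $m$ the number of $x<p\le x+h$ with $m^2\mid|\bar{E}(\Bbb{F}_p)|$ is $\ll 3^{\nu(m_s)}(h/m^2+\sqrt{x}/m)$, i.e.\ the bound must scale with $h$ and not with $x$. The paper obtains this by writing $(\pi_p-1)=m_im_rI(m_s)\cdot(\alpha)$ and counting lattice points $\alpha\in\mathcal{O}_K$ in the thin annulus $(\sqrt{x}-1)^2/m^2\le\norm(\alpha)\le(\sqrt{x+h}+1)^2/m^2$, using $(\sqrt{x+h}-\sqrt{x})^2\ll h$; this short-interval Brun--Titchmarsh count is one of the two essential inputs and cannot be inherited from the long-interval case by fiat.

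On the main term, the paper does not prove an individual short-interval prime number theorem for each field $L_{\a,m}$; it keeps the single sum over $m$ with the Chebotarev sets $D_{m^2}$ (which are unions of elements, not just the identity, so the condition is not ``splits completely'' in a single field), translates $\sigma_{\p}\subseteq D_{m^2}$ into ray-class congruences modulo $\mathfrak{f}m^2$ via Artin reciprocity, and applies the short-interval Bombieri--Vinogradov theorem for number fields, Theorem \ref{BVSI-Li}, on average over moduli; the double sum over $\a$ and $m$ enters only through the identity \eqref{deltaE-id} at the very end. Your zero-density route is morally the same machinery (it is how Theorem \ref{BVSI-Li} is proved), and you rightly flag it as the principal difficulty, but two details are off: an individual estimate with error $O_A(h/(\log x)^A)$ for real ray-class characters must contend with possible Siegel zeros, which the averaged statement sidesteps; and the exponent $\frac{1}{5}$ does not arise from balancing against conductor growth --- after truncation the conductors are polylogarithmic --- but is $\frac{2}{5n_K}$ with $n_K=2$, i.e.\ it is forced by the degree of $K$ in the zero-density input.
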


\begin{remark}
The proofs of \eqref{Co-02} and \eqref{Co-02-GRH} in \cite{Co08} respectively rely on the effective versions of the Chebotarev density theorem established by Lagarias-Odlyzko  \cite{LO77}  and  M.R. Murty-V.K. Murty-Saradha \cite{MMS88}. In contrast, our proofs of \eqref{main-1} and Theorem  \ref{main-thm2} make a use of the Bombieri-Vinogradov theorem for number fields due to Huxley \cite{Hu86} (see also Theorem \ref{Huxley} below)  and its short interval variant (Theorem \ref{BVSI-Li}). This is inspired by the works of  Akbary-V.K. Murty \cite {AM} and M.R. Murty \cite{MR83} on the cyclicity problem discussed below. The key observation is that the Chebotarev conditions involved in the proofs can be translated into congruence conditions over certain ray class groups via Artin reciprocity. Nonetheless, to establish \eqref{improved-hQ}, we still require the effective result of  M.R. Murty, V.K. Murty, and Saradha \cite{MMS88}. The new input in our argument is to consider a finer splitting \eqref{hE-expression-finer} below and invoke  the  Brun-Titchmarsh inequality (for $K$) due to Hinz and Lodemann \cite{HL93} to control the ``middle range'' $\mathcal{N}_0(x, y , x^{\theta})$.
\end{remark}

Last but not least, we recall that rooted in the Lang-Trotter conjecture, there is a problem of finding  an asymptotic formula for the number of primes $p\le x$ for which the reduction modulo $p$ of $E$ is cyclic.\footnote{Note that if  $|\bar{E}(\Bbb{F}_p)|$ is square-free, then $\bar{E}(\Bbb{F}_p)$ is cyclic. So, the study of the square-freeness of $|\bar{E}(\Bbb{F}_p)|$ may  be seen as an intermediate problem between such a cyclicity problem and Koblitz's conjecture \cite{Ko} on the primality of $|\bar{E}(\Bbb{F}_p)|$.} More precisely, one may consider
$$
\pi_{c}(x,E)=\#\{p \le x \mid \text{$p\nmid N_E$ and $\bar{E}(\Bbb{F}_p)$ is cyclic}\}. 
$$
This has been studied by Akbary, Cojocaru, Gupta, M.R. Murty, V.K. Murty, and Serre (see  \cite{Wong20} for a more detailed discussion and references therein). Similar to the above-discussed problem  concerning square-free orders for CM elliptic curves in short intervals, one can also consider $$
\pi_{c}(x+h,E)-\pi_{c}(x,E)=\#\{ x< p \le x+h \mid \text{$p\nmid N_E$ and $\bar{E}(\Bbb{F}_p)$ is cyclic}\}. 
$$
By the work of  Akbary and  V.K. Murty \cite{AM}, one has
$$
\pi_{c}(2x,E)-\pi_{c}(x,E) =\mathfrak{c}_E(\Li(2x)-\Li(x) )+O_{A,D}\Big(\frac{x}{(\log x)^A}\Big)
$$
uniformly in $N_E \le (\log x)^D$, where  $$
\mathfrak{c}_E=\sum_{m=1}^{\infty}\frac{\mu(m)}{[\Bbb{Q}(E[m]):\Bbb{Q}]},
$$
and the implied constant depends on $A$ and $D$. Moreover, under GRH, the work of Cojocaru and M.R. Murty \cite{CM04} yields 
\begin{equation}\label{CM}
\pi_{c}(x+h,E)-\pi_{c}(x,E) =\mathfrak{c}_E(\Li(x+h)-\Li(x) )+O_E\Big(\frac{h}{(\log x)^A}\Big)
\end{equation}
for any  $x^{1-\delta}\le h\le x$ with $0\le \delta<\frac{1}{4}$. Unconditionally, in \cite{Wong20}, the  present author showed that the estimate \eqref{CM} is valid whenever  $x^{1-\delta}\le h\le x$, with $0\le \delta<\frac{1}{25}$. In this article, we shall further sharpen such an unconditional result as follows.

\begin{theorem}\label{main-thm-cyclicity}
Let $E/\Bbb{Q}$ be an  elliptic curve of conductor $N_E$ and with complex multiplication by the  ring of integers $\mathcal{O}_K$ of an imaginary quadratic field $K$. Let $A> 0$ and  $0\le \delta<\frac{1}{5}$. Then 
for any  $x^{1-\delta}\le h\le x$,  we have
\begin{align*}
\pi_{c}(x+h,E)-\pi_{c}(x,E)
=\mathfrak{c}_E (\Li(x+h)-\Li(x))+O\Big(N_E(\log N_E)\frac{h}{(\log x)^A}\Big),
\end{align*}
where  the implied constant  depends on $\Bbb{Q}(E[2])$ and $A$.
\end{theorem}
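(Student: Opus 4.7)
The plan is to mimic the architecture of the proof of Theorem \ref{main-thm2}, replacing the square-free inclusion-exclusion by the Möbius inversion adapted to the cyclicity problem. The starting point is
\begin{equation*}
\pi_c(x+h,E)-\pi_c(x,E)
=\sum_{m\ge 1}\mu(m)\bigl(\pi_{E,m}(x+h)-\pi_{E,m}(x)\bigr),
\end{equation*}
where $\pi_{E,m}(y)$ counts primes $p\le y$ of good reduction for which $E[m]\subseteq\bar{E}(\Bbb{F}_p)$, equivalently (up to the finitely many primes dividing $mN_E$) primes $p$ such that the prime $\mathfrak{p}\mid p$ in $K$ splits completely in $K(E[m])/K$.

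Since $E$ has CM by $\mathcal{O}_K$, the extension $K(E[m])/K$ is abelian, and CM theory (Artin reciprocity) identifies it with a subfield of a ray class field of $K$ whose modulus depends only on $m$ and $N_E$. The Chebotarev condition defining $\pi_{E,m}$ then becomes a congruence condition on $\mathfrak{p}$ in the appropriate ray class group of $K$, opening the door to the Bombieri-Vinogradov machinery over number fields exactly as in the proof of Theorem \ref{main-thm2}.

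I would then split the $m$-sum at a threshold $M=(\log x)^B$ with $B=B(A)$ sufficiently large. For $m\le M$, applying the short-interval Bombieri-Vinogradov theorem over $K$ (Theorem \ref{BVSI-Li}) uniformly in the moduli yields the main term $\mathfrak{c}_E(\Li(x+h)-\Li(x))$ together with an admissible error $O_A(N_E(\log N_E)h/(\log x)^A)$; the factor $N_E(\log N_E)$ reflects the conductor data of the ray class field and the dependence on $\Bbb{Q}(E[2])$. For $M<m\le\sqrt{h}/(\log x)^C$, I would use that $E[m]\subseteq\bar{E}(\Bbb{F}_p)$ implies $m^2\mid|\bar{E}(\Bbb{F}_p)|$, hence $p\equiv 1\pmod{m^2}$ up to ramification, and invoke the number field Brun-Titchmarsh inequality of Hinz-Lodemann to obtain $\ll h/(\phi(m^2)\log(h/m^2))$ per modulus, summing to $O(h/(M\log x))$, which is absorbed by choosing $B$ large. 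The range $m>\sqrt{h}/(\log x)^C$ is vacuous because Hasse's bound forces $m^2\le p+1+2\sqrt{p}$ whenever $E[m]\subseteq\bar{E}(\Bbb{F}_p)$.

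The constraint $\delta<\tfrac15$ arises, exactly as in Theorem \ref{main-thm2}, from the admissible modulus range in Theorem \ref{BVSI-Li}, which dictates how large $M$ may be taken while still saving arbitrary powers of $\log x$. The main technical obstacle I anticipate is ensuring that the CM ray class field translation has enough uniformity in $m$ and $N_E$ for the entire range $m\le M$ to be handled by a single application of the short-interval Bombieri-Vinogradov theorem, and carefully tracking how the dependence on $\Bbb{Q}(E[2])$ propagates through the Möbius-weighted sum into the final error term.
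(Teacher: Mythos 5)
Your overall architecture (M\"obius inversion over $m$, Bombieri--Vinogradov in short intervals over ray classes of $K$ for small $m$, an upper-bound sieve for large $m$) matches the paper's, and you correctly identify where $\delta<\frac15$ comes from. But the treatment of the large-$m$ range has a genuine gap. The Hinz--Lodemann inequality (Theorem \ref{HLBT}) is a \emph{long-interval} bound: applied to the difference $\pi(x+h,\q,\a)-\pi(x,\q,\a)$ it only yields $\ll x/(m^2\log x)$ per modulus, not the $\ll h/(\phi(m^2)\log(h/m^2))$ you claim. Summing $x/(m^2\log x)$ over $m>M=(\log x)^B$ gives $x/(M\log x)$, which is \emph{not} $O(h/(\log x)^A)$ once $h=x^{1-\delta}$ with $\delta>0$. (In the paper, Hinz--Lodemann is used only for the middle range $\mathcal{N}_0$ in the GRH part of Theorem \ref{main-thm1}, where the interval is $[1,x]$.) What is actually needed, and what the paper supplies as its key new input, is the short-interval estimate \eqref{NTA-BT}: writing $(\pi_p-1)=m\cdot(\alpha)$ when $p$ splits completely in $\Bbb{Q}(E[m])$ and counting lattice points $\alpha\in\mathcal{O}_K$ whose norm is confined by \eqref{trival-short-ineq} to an interval of length $\ll h/m^2$ gives $\pi_E(x+h,m)-\pi_E(x,m)\ll h/m^2+\sqrt{x+h}/m$; summing this over $m>z$ up to the Hasse cutoff yields the admissible $O(N_E^{1/2}hz^{-1}+\sqrt{x}\log x)$. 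A related slip: your assertion that $E[m]\subseteq\bar{E}(\Bbb{F}_p)$ forces $p\equiv 1\pmod{m^2}$ is false; one only gets $p\equiv 1\pmod{m}$ over $\Bbb{Q}$, and the correct $m^2$-saving comes from $\pi_p\equiv 1\pmod{m\mathcal{O}_K}$ in $K$, whose modulus has norm $m^2$.

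A second error: the range $m>\sqrt{h}/(\log x)^C$ is \emph{not} vacuous. Hasse's bound gives $m^2\le|\bar{E}(\Bbb{F}_p)|\le(\sqrt{p}+1)^2$, i.e.\ $m\le\sqrt{x+h}+1$, so the sum runs up to roughly $\sqrt{x}$, not $\sqrt{h}$; your own justification ($m^2\le p+1+2\sqrt{p}$) already shows this. The uncovered range $\sqrt{h}(\log x)^{-C}<m\le 2\sqrt{x+h}$ must be bounded, and it is precisely the secondary term $\sqrt{x+h}/m$ in \eqref{NTA-BT} that handles it, contributing $O(\sqrt{x}\log x)$, which is admissible because $h\ge x^{4/5}$. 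With these two repairs --- replacing the Hinz--Lodemann step by the lattice-point bound \eqref{NTA-BT} and extending the tail to $2\sqrt{x+h}$ --- your outline becomes the paper's proof.
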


\begin{remark} 
The key idea for the improvement is to use a number field analogue of the Brun-Titchmarsh inequality, the estimate \eqref{NTA-BT}. A similar estimate of such an analogue also plays a crucial role in proving Theorem \ref{main-thm2}, especially bounding  the first inner sum in \eqref{M-h-mid}. By these estimates, we then \emph{only} require the Bombieri-Vinogradov theorem for number fields and its short interval variant to control small suitable initial ranges through the sieving procedures.
\end{remark}

The rest of this article is arranged as follows. In the next section, we will discuss the strategy of proving Theorems \ref{main-thm1} and \ref{main-thm2}. We will prove Theorems \ref{main-thm1}, \ref{main-thm2}, and \ref{main-thm-cyclicity} in Sections \ref{proof-main-thm1}, \ref{proof-main-thm2}, \ref{proof-main-thm3}, respectively. We also note that throughout our proofs of \eqref{main-1} and Theorems \ref{main-thm2} and \ref{main-thm-cyclicity}, we will implicitly assume $N_E\le \exp(\sqrt{\log x})$ as the claimed estimates hold trivially otherwise.

\section{Overview of the proofs of Theorems \ref{main-thm1} and \ref{main-thm2}}

In this section, we shall collect some necessary facts and review the strategy of Cojocaru \cite{Co08}. Let $E/\Bbb{Q}$ be an elliptic curve with complex multiplication by the full ring of integers $\mathcal{O}_K$ of an imaginary quadratic field $K$. Let $N_E$, $E[m]$, and $E[\a]$ be defined as in the introduction. We recall that for any $m\in\Bbb{N}$, there is an injective Galois representation
$$
\rho_m :\Gal(K(E[m])/K )\rightarrow \GL_2(\Bbb{Z}/m\Bbb{Z})
$$ 
such that
\begin{equation}\label{tr}
\tr \rho_m(\sigma_{\p})\equiv a_{\p} \mymod{ m}
\end{equation}
and
\begin{equation}\label{det}
\det \rho_m(\sigma_{\p})\equiv \norm(\p) \mymod{ m},
\end{equation}
for any prime $\p\nmid m N_E$ of $K$, where $\sigma_{\p}$ denotes the Artin symbol at $\p$, $\norm(\p)$ is the norm of $\p$, and $a_{\p}= \norm(\p)+1 -|\bar{E}(\Bbb{F}_{\p})|$.  Following Cojocaru \cite{Co08}, we set
$$
D_{m}=\{g\in \Gal(K(E[m])/K )\mid  \det\rho_m (g) + 1 - \tr \rho_m(g) \equiv 0 \mymod{ m} \}.
$$
By a criterion of Deuring \cite{De41}, for any prime $p\ge 5$ that is of good reduction for $E$, $p$ is ordinary (i.e. $a_p\neq 0$) if and only if  $p$ splits in $K$. Moreover, if $p$ splits in $K$ and $\p$ is a prime of $K$  above $p$, then we have $\norm(\p)=p$ and $a_\p=a_p$. Thus,  from  \eqref{tr} and \eqref{det}, it follows that 
the condition $|\bar{E}(\Bbb{F}_p)|\equiv 0 \mymod{ m^2}$  is equivalent to $\sigma_\p \subseteq D_{m^2}.$ Hence, we obtain
\begin{align*}
&\#\{ p \le x \mid p\nmid m N_E, a_p\neq 0, |\bar{E}(\Bbb{F}_p)|\equiv 0\mymod{ m^2}\}\\
& = \frac{1}{2} \#\{\p\subset\mathcal{O}_K \mid  \norm(\p)\le x, \p\nmid mN_E, \sigma_{\p}\subseteq  D_{m^2} \}+ O\Big( \frac{\sqrt{x}}{\log x} +\log( mN_E) \Big),
\end{align*}
where the error comes from the degree-two primes $\p$ of $K$ such that $\norm(\p)\le x$ (there are at most $O(\sqrt{x}/\log x)$ such primes) and  ramified primes (cf. \cite[Eq. (3.2)]{AM}).

For the sake of convenience, we shall set
\begin{equation}\label{def-pi-D}
\pi_{D_{m^2}}(x)=\#\{\p\subset \mathcal{O}_K \mid \norm(\p)\le x, \p\nmid mN_E, \sigma_{\p}\subseteq  D_{m^2} \}.
\end{equation} 
Similar to \cite[Sec. 2]{Co08}, by the inclusion-exclusion principle, Hasse's bound, and the above discussion,  we can express $h_E(x,\Bbb{Q})$ as
\begin{align}\label{hE-expression}
 \begin{split}
h_E(x,\Bbb{Q})
&=\sum_{ m \le 2\sqrt{x}} \mu(m) \#\{ p\le x \mid   p\nmid mN_E, a_p\neq 0,  m^2\mid |\bar{E}(\Bbb{F}_p)| \}\\
& =\mathcal{N}(x,y)+\mathcal{M}(x,y,2\sqrt{x})+ O\Big( \frac{\sqrt{x} y}{\log x} +y\log( yN_E) \Big),
 \end{split}
\end{align} 
where $h_E(x,\Bbb{Q})$ is as introduced in \eqref{Co-01},
$$
\mathcal{N}(x,y) =\frac{1}{2}\sum_{m\le y}\mu(m)\pi_{D_{m^2}}(x),
$$
and
$$
\mathcal{M}(x,y,2\sqrt{x}) =O\Big( \sum_{y< m \le 2\sqrt{x}}\#\{ p\le x \mid   p\nmid mN_E, a_p\neq 0,  m^2\mid |\bar{E}(\Bbb{F}_p)| \} \Big).
$$
In addition, we can consider a finer splitting
\begin{align}\label{hE-expression-finer}
 \begin{split}
h_E(x,\Bbb{Q})
 =\mathcal{N}(x,y) +\mathcal{N}_0(x, y , x^{\theta})
+\mathcal{M}(x, x^{\theta} ,2\sqrt{x})+ O\Big( \frac{x^{\frac{1}{2}+\theta}}{\log x} +x^{\theta}\log( N_E x) \Big),
 \end{split}
\end{align} 
for $\theta\in (0,\frac{1}{4})$,  where
\begin{equation}\label{def-M0}
\mathcal{N}_0(x, y , x^{\theta}) 
 =O\Big( \sum_{ y< m\le x^{\theta}} \mu(m)^2 \pi_{D_{m^2}}(x) \Big).
\end{equation}
On the other hand, reasoning similarly, we also have
$$
h_E(x,h,\Bbb{Q})=\mathcal{N}(x,h,y)+\mathcal{M}(x,h,y,2\sqrt{x}) + O\Big( \frac{\sqrt{x} y}{\log x} +y\log( yN_E) \Big),
$$
where  $h_E(x,h,\Bbb{Q})$ is defined in \eqref{def-h-SI},
$$
\mathcal{N}(x,h,y) =\frac{1}{2}\sum_{m\le y}\mu(m)\#\{\p\subset \mathcal{O}_K \mid x< \norm(\p)\le x+h, \p\nmid mN_E, \sigma_{\p}\subseteq  D_{m^2} \},
$$
and
\begin{align*}
 \begin{split}
&\mathcal{M}(x,h,y,2\sqrt{x+h})\\
& =O\Big(\sum_{y<m\le 2\sqrt{x+h}}\#\{x< p\le x+h \mid   p\nmid mN_E, a_p\neq 0,  m^2\mid |\bar{E}(\Bbb{F}_p)| \}\Big).
 \end{split}
\end{align*}

To establish asymptotic formulae for $h_E(x,\Bbb{Q})$ and $h_E(x,h,\Bbb{Q})$, it is crucial to analyse $\mathcal{N}(x,y)$ and $\mathcal{N}(x,h,y)$, respectively. In \cite{Co08}, Cojocaru applied effective versions of the Chebotarev density theorem established in \cite{LO77,MMS88} to handle $h_E(x,\Bbb{Q})$. Under GRH, the effective version due to M.R. Murty-V.K. Murty-Saradha \cite{MMS88}  provides a satisfactory error term. In contrast, the unconditional result obtained by Lagarias and Odlyzko gives a much weaker estimate, which led Cojocaru to use a more delicate sieve argument. In light of the works of  Akbary-V.K. Murty \cite {AM} and M.R. Murty \cite{MR83}, we observe that the Chebotarev conditions involved (i.e., $\sigma_{\p}\subseteq D_{m^2}$) can be further translated into congruence conditions over certain ray class groups via Artin reciprocity (see Section \ref{proof-main-thm1} for more details). This observation hints  at using the following  Bombieri-Vinogradov theorem for number fields and its short interval variant. As shall be seen, such a Bombieri-Vinogradov theorem for number fields will play a crucial role in improving Cojocaru's estimate \eqref{Co-02}.

Let $F$ be a number field, and let $\a$ and $\q$ be integral ideals of $F$.  For any $(\a,\q)=1$, we set
$$
\pi(x,\q,\a)=\#\{\p \subset \mathcal{O}_F \mid \text{$\norm(\p)\le x$ and $\p\sim\a\mymod{\q}$}\}, 
$$
where $\p\sim\a\enspace\mymod{\q}$ means that $\p$ and $\a$ belong to the same ray class of the ray class group modulo $\q$. (For the background on ray class groups, we refer the reader to \cite[Ch. 3]{Ch09}.) In  \cite{Hu86}, Huxley proved the following Bombieri-Vinogradov theorem for number fields, which improves the previous result of Wilson \cite{Wil}.
\begin{theorem}[Huxley]\label{Huxley}
Let $F$ be a number field. Then for any $A>0$, there is $B=B(A)>0$ such that for $Q\le x^{1/2}/(\log x)^{B}$, one has
\begin{align*}
\sum_{\norm(\q)\le Q}\frac{h(\q)}{\phi(\q)}\max_{(\a,\q)=1}\max_{y \le x}\Big|\pi(y,\q,\a)-\frac{1}{h(\q)}\Li (y)\Big|\ll_{F,A} \frac{x}{(\log x)^{A}},
\end{align*}
where $h(\q)$ is the cardinality of the ray class group modulo $\q$, and  $\phi(\q)$ is the number field analogue of Euler's totient function for $F$.
\end{theorem}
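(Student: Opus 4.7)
The plan is to follow Bombieri's blueprint for the classical theorem, systematically replacing Dirichlet characters modulo $q$ by ray class characters of $F$ modulo $\q$. First I would pass from $\pi(y,\q,\a)$ to the Chebyshev-type sum $\psi_F(y,\q,\a)=\sum_{\norm(\n)\le y,\,\n\sim\a\,\mymod{\q}}\Lambda_F(\n)$, where $\Lambda_F$ is the von Mangoldt function on integral ideals of $\O_F$. Bounds on $\pi(y,\q,\a)$ follow by partial summation once bounds on $\psi_F$ are established, the contribution of prime ideals of residue degree $\ge 2$ being absorbed into $O_F(\sqrt{x}\log x)$ per modulus and hence into the target error after summing trivially over $\q$.

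Second, by orthogonality of ray class characters modulo $\q$,
\[
\psi_F(y,\q,\a)-\frac{1}{h(\q)}\sum_{\substack{\norm(\n)\le y \\ (\n,\q)=1}}\Lambda_F(\n) = \frac{1}{h(\q)}\sum_{\chi\neq\chi_0}\bar\chi(\a)\,\psi_F(y,\chi),
\]
with $\psi_F(y,\chi)=\sum_{\norm(\n)\le y}\Lambda_F(\n)\chi(\n)$, where Landau's prime ideal theorem matches the principal-character contribution to $\Li(y)/h(\q)$ up to acceptable error. Each $\chi$ modulo $\q$ is induced from a primitive character $\chi^*$ of some conductor $\mathfrak{f}\mid\q$; switching to primitive characters regroups the outer sum over $\q$ into a sum over conductors, and the weight $h(\q)/\phi(\q)$ appears naturally from the accompanying divisor counts. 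The task thereby reduces to proving
\[
\sum_{\norm(\mathfrak{f})\le Q}\ \sum_{\chi^*\,\mymod{\mathfrak{f}}}\ \max_{y\le x}|\psi_F(y,\chi^*)| \ \ll_{F,A}\ \frac{x}{(\log x)^A},
\]
uniformly, the sum running over nontrivial primitive characters of conductor of norm at most $Q\le x^{1/2}/(\log x)^B$.

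Third, I would decompose $\Lambda_F$ via a Vaughan-type identity on $\O_F$ into ``type I'' sums (smooth in one variable) and ``type II'' bilinear sums. The type I contributions are estimated by partial summation combined with a Siegel--Walfisz theorem for Hecke $L$-functions, invoked with an ineffective constant (depending on $F$ and $A$) to neutralize possible exceptional zeros of the corresponding $L$-functions. The type II pieces are handled by Cauchy--Schwarz followed by a large sieve inequality for primitive ray class characters of $F$, of the shape
\[
\sum_{\norm(\mathfrak{f})\le Q}\ \sum_{\chi^*\,\mymod{\mathfrak{f}}}\Big|\sum_{\norm(\n)\le N}a_\n\chi^*(\n)\Big|^2 \ \ll_F\ (Q^2+N)\sum_{\norm(\n)\le N}|a_\n|^2.
\]
The main obstacle is the proof of this large sieve with the sharp factor $Q^2+N$: it rests on a duality argument coupled with Landau's counting of ideals of bounded norm, the delicate step being to handle the unit group of $\O_F$ (which makes the geometry of principal ideals more intricate than over $\Z$) uniformly across conductors. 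Once the large sieve is in place, balancing parameters in the Vaughan decomposition and choosing $B=B(A)$ sufficiently large yields the claimed saving of $(\log x)^{-A}$.
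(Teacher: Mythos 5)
The paper does not prove this statement at all: it is quoted as an external result from Huxley \cite{Hu86} (the 1971 paper ``The large sieve inequality for algebraic number fields III. Zero-density results''), so there is no internal proof to compare against. Your sketch is a coherent route, but it is genuinely different from Huxley's. Huxley argues in the original Bombieri style: he deduces the mean-value theorem from zero-density estimates for Hecke $L$-functions, which he obtains from the number-field large sieve developed in the earlier papers of that series (building on Wilson \cite{Wil}); Vaughan's identity postdates his paper. Your plan --- pass to $\psi_F$, use orthogonality of ray class characters and reduction to primitive characters, then a Vaughan decomposition of $\Lambda_F$ with Siegel--Walfisz for Hecke $L$-functions (ineffective, to absorb possible exceptional zeros) for the type I pieces and Cauchy--Schwarz plus the large sieve for the type II pieces --- is the now-standard elementary alternative, and it does carry over to number fields. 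The two routes ultimately rest on the same hard inputs, which you correctly isolate: the large sieve for primitive ray class characters with the factor $Q^2+N$, where the passage from elements in a fundamental domain for the unit action to ideals of bounded norm is the delicate point, and a Siegel--Walfisz/Landau--Page treatment of small conductors. Two minor soft spots in your write-up: the $\max_{y\le x}$ requires an extra (standard) device such as dyadic splitting or Gallagher's lemma, and the weight $h(\q)/\phi(\q)=h_F2^{r_1}/T(\q)$ is bounded above in terms of $F$ alone, so it does not really ``appear from divisor counts'' --- it can simply be carried along. Neither point is a genuine gap in the strategy.
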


Also, we recall the following short interval variant established by the present author in \cite{Wong20}.

\begin{theorem}\label{BVSI-Li}
Let $F$ be a number field of degree $n_F$. For $0\le \delta<\frac{2}{5n_F}$, fix $0\le\theta<\frac{1}{5n_F+10}(2-5n_F\delta)$. Then for any $x^{1-\delta}\le h\le x$ and $A>0$, we have
\begin{align*}
\sum_{\norm(\q)\le x^{\theta}}\frac{h(\q)}{\phi(\q)}\max_{(\a,\q)=1}\Big|\pi(x+h,\q,\a)-\pi(x,\q,\a)-\frac{\Li (x+h) -\Li(x)}{h(\q)}\Big| \ll \frac{h}{(\log x)^{A}},
\end{align*}
where the implied constant depends on $F$ and $A$. 
\end{theorem}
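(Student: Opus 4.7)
The plan is to combine Huxley's approach to Theorem \ref{Huxley} with the short-interval Bombieri--Vinogradov method of Perelli, Pintz, and Salerno over $\Bbb{Q}$, suitably adapted to the number field setting. First, orthogonality of characters of the ray class group modulo $\q$ rewrites the innermost quantity as
\[
\frac{1}{h(\q)}\sum_{\chi\neq \chi_0}\bar{\chi}(\a)\bigl(\psi(x+h,\chi)-\psi(x,\chi)\bigr)+O(\sqrt{h}),
\]
where $\psi(y,\chi)=\sum_{\norm(\n)\le y}\Lambda(\n)\chi(\n)$ and the $O(\sqrt h)$ absorbs the passage between $\pi$ and $\psi$ together with the contribution of prime powers. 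A standard reduction to the primitive character $\chi^*$ inducing each $\chi$ then shows that it suffices to bound
\[
S(x,h) := \sum_{\norm(\q^*)\le x^\theta}\sideset{}{^{*}}\sum_{\chi^* \mymod{\q^*}}\bigl|\psi(x+h,\chi^*)-\psi(x,\chi^*)\bigr|,
\]
with the starred inner sum ranging over primitive Hecke characters modulo $\q^*$.

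Next I would apply the explicit formula for $\psi(y,\chi^*)$, truncated at height $T$, to obtain
\[
\psi(x+h,\chi^*)-\psi(x,\chi^*)=-\sum_{|\Im \rho|\le T}\frac{(x+h)^\rho-x^\rho}{\rho}+O\!\Big(\frac{x(\log(\norm(\q^*)\,x))^2}{T}\Big).
\]
The sum over nontrivial zeros is the crux of the argument. I would bound $|(x+h)^\rho-x^\rho|/|\rho|$ by $hx^{\beta-1}$ for zeros with low imaginary part and by $x^{\beta}/|\gamma|$ otherwise, then slice dyadically in the real part $\sigma=\beta$. This reduces $S(x,h)$ to sums of the zero-counting function $N_F(\sigma,T,\chi^*)$ weighted by $h x^{\sigma-1}$, up to logarithmic losses.

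Finally, I would feed in a Huxley-type zero-density estimate for Hecke $L$-functions of the shape
\[
\sum_{\norm(\q^*)\le Q}\sideset{}{^{*}}\sum_{\chi^* \mymod{\q^*}}N_F(\sigma,T,\chi^*)\ll (Q^2T)^{c_F(1-\sigma)}(\log QT)^{C_F},
\]
in which $c_F$ depends on $n_F=[F:\Bbb{Q}]$, together with a log-free Deuring--Heilbronn repulsion to suppress the contribution of any Landau--Siegel zero of the Dedekind zeta function. Optimising $T$ against $h$ while taking $Q=x^\theta$ yields the admissible range $\theta<\frac{1}{5n_F+10}(2-5n_F\delta)$ asserted in the theorem. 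The main obstacle is the $n_F$ dependence of the zero-density exponent $c_F$, which is precisely what forces $\theta$ to shrink with the degree, coupled with the need to handle the possible exceptional zero by an ineffective Siegel-type argument --- this last step is responsible for the implied constant depending on $F$.
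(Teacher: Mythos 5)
First, a point of orientation: this paper does not prove Theorem \ref{BVSI-Li} at all --- it is quoted verbatim from the author's earlier work \cite{Wong20} and used as a black box in Sections \ref{proof-main-thm2} and \ref{proof-main-thm3}. So there is no in-paper proof to compare against; the relevant comparison is with the argument of \cite{Wong20}, which does proceed along the classical zero-density route you describe (orthogonality over the ray class group, reduction to primitive Hecke characters, the truncated explicit formula, Huxley's large-sieve/zero-density machinery for number fields, and a Siegel--Fogels type treatment of a possible exceptional zero, following the short-interval template of Perelli--Pintz--Salerno). In that sense your outline is pointed in the right direction and is essentially the standard, and presumably the intended, approach.

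That said, as a proof your proposal has a genuine gap at exactly the step that matters: the zero-density input and the optimisation are asserted, not carried out. You write the density estimate as $\ll (Q^2T)^{c_F(1-\sigma)}(\log QT)^{C_F}$ with an unspecified $c_F$ ``depending on $n_F$'' and then claim that optimising $T$ against $h$ ``yields'' $\theta<\frac{1}{5n_F+10}(2-5n_F\delta)$. But the whole content of the theorem is this numerology. In Huxley's number-field zero-density results the degree enters through the analytic conductor, i.e.\ through the $T$-aspect (roughly $\norm(\q)T^{n_F}$), not through the density exponent itself, so the shape $(Q^2T)^{c_F(1-\sigma)}$ is not what is actually available; and without fixing the exponents one cannot check that taking $T\asymp x/h\asymp x^{\delta}$ (forced by the error term $x(\log(\norm(\q^*)x))^2/T\ll h(\log x)^{-A}$) together with $Q=x^{\theta}$ really lands on the stated constraint $\theta(5n_F+10)+5n_F\delta<2$ rather than on some other range. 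Two smaller issues: the $O(\sqrt h)$ you attach to the $\pi\mapsto\psi$ passage should really be the prime-power/partial-summation error $\ll(hx^{-1/2}+x^{1/2})\log x$ per character, which must then be summed over the $\ll x^{2\theta}$ characters and checked against $h(\log x)^{-A}$; and the reduction to primitive characters silently discards the weight $h(\q)/\phi(\q)$ and the multiplicity with which each $\q^*$ is counted among $\q$ with $\q^*\mid\q$, $\norm(\q)\le x^{\theta}$ --- both are routine but need to be written down, since the small-conductor range $\norm(\q^*)\le(\log x)^{C}$ has to be split off and handled by a Siegel--Walfisz/Fogels theorem separately from the density range. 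As it stands the proposal is a correct roadmap but does not establish the stated admissible range of $\theta$.
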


In our consideration, we will take $F=K$, an imaginary quadratic field of class number one associated to a CM elliptic curve $E$ as discussed in the introduction. For such an instance, the factor $\frac{h(\q)}{\phi(\q)}$ can be dropped from the estimates in Theorems \ref{Huxley} and \ref{BVSI-Li} as follows. Let $h_F$ denote the class number of $F$ and  $r_1$ be the number of real embeddings of $F$. It is known that 
$$
h(\q)=\frac{h_F2^{r_1}\phi(\q)}{T(\q)},
$$
where $T(\q)$ is the number of residue classes $\mymod{\q}$ that contain a unit (see \cite[Eq. (7)]{Wil}). Now, for imaginary quadratic fields of class number one, it is known that $T(\q)\le 6$, and hence  $\frac{h(\q)}{\phi(\q)}\ge \frac{1}{6}$. Thus, the factor $\frac{h(\q)}{\phi(\q)}$ can be removed.

Secondly, we shall control $\mathcal{M}(x,y,2\sqrt{x})$ and $\mathcal{M}(x,h,y,2\sqrt{x+h})$ by the sieve methods used by Cojocaru \cite{Co08}.  As remarked by  Cojocaru, the difficulties of the proofs lie in analysing $\mathcal{M}(x,y,2\sqrt{x})$ and $\mathcal{M}(x,h,y,2\sqrt{x+h})$ since the classical  Brun-Titchmarsh inequality is not strong enough. Consequently, similar to the argument in \cite[Sec. 5]{Co08}, we will require a Brun-Titchmarsh type estimate for short intervals in Section \ref{proof-main-thm2}. 

Last but not least, we remark that instead of directly following Cojocaru's argument to control $\mathcal{N}_0(x, y , x^{\theta})$, one can derive a refined estimate for this range 
by applying  the following number field analogue of the  Brun-Titchmarsh inequality established by  Hinz and Lodemann \cite[Theorem 4]{HL93}.

\begin{theorem}[Hinz-Lodemann]\label{HLBT} In the above notation, for any $(\a,\q)=1$, if $\norm(\q) < x$, then
$$
\pi(x,\q,\a)\le \frac{2x}{h(\q)   \log ( x/ \norm(\q) )}\cdot \Big( 1 +O\Big( \frac{ \log \log ( 3x/ \norm(\q)) }{ \log ( x/ \norm(\q) )}  \Big) \Big) .
$$
\end{theorem}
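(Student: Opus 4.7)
The plan is to establish this Brun--Titchmarsh inequality via Selberg's upper bound sieve, transplanted from $\Z$ to the ring of integers $\O_F$ of the number field $F$. Set $\mathcal{A} = \{\b \subseteq \O_F : \norm(\b) \le x,\ \b \sim \a \pmod{\q}\}$ and sieve by the set of prime ideals $\p$ with $\p \nmid \q$ and $\norm(\p) < z$, for a parameter $z$ to be chosen at the end. Every prime ideal counted by $\pi(x,\q,\a)$ whose norm is $\ge z$ survives the sieve, while primes of smaller norm contribute at most $O(z/\log z)$ and can be absorbed into the error.

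Next, I would verify the required local densities: for each $\p \nmid \q$, divisibility by $\p$ is independent of the ray class modulo $\q$, so a ray-class version of Landau's ideal-counting theorem should give
\[ \#\{\b \in \mathcal{A} : \d \mid \b\} = \frac{x}{h(\q)\,\norm(\d)} + O\bigl((x/\norm(\d))^{1 - 1/n_F}\bigr) \]
for any square-free integral ideal $\d$ coprime to $\q$, where $n_F = [F:\Q]$. Selberg's weighted sieve with optimal weights $\lambda_\d$ supported on such $\d$ with $\norm(\d) \le z$ then yields
\[ \pi(x,\q,\a) \le \frac{x/h(\q)}{G(z)} + R(z) + O(z/\log z), \]
where $G(z) = \sum_{\norm(\d) < z,\ (\d,\q) = 1,\ \mu(\d)^2 = 1}\ \prod_{\p \mid \d} (\norm(\p) - 1)^{-1}$ and $R(z) \ll z^{2}\,x^{1 - 1/n_F}$ upon summing the remainders over divisor ideals. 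A number-field Mertens-type estimate, extracted from Landau's prime ideal theorem for $F$, yields $G(z) = c_F \log z + O(1)$ with $c_F$ proportional to the residue of $\zeta_F$ at $s = 1$.

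Finally, choosing $z = (x/\norm(\q))^{1/2}/L$ with $L = L(x,\q)$ a slowly growing factor gives $\log z = \tfrac{1}{2}\log(x/\norm(\q)) + O(\log\log(3x/\norm(\q)))$, producing the leading factor $2/\log(x/\norm(\q))$ together with the claimed secondary $\log\log$ term. The main obstacle I expect is controlling the uniformity of $R(z)$ in $\norm(\q)$ up to nearly $x$: one must ensure $z^2 x^{1-1/n_F} = o(x/h(\q))$, which forces a careful quantitative treatment of the error in the ray-class ideal count, likely by refining the trivial $O(x^{1-1/n_F})$ via Hecke $L$-function bounds. This is the delicate step distinguishing the Hinz--Lodemann argument from the classical $\Z$-case, where the analogous ideal count has error $O(1)$ and the sieve's remainder is immediately tame.
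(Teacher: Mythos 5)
The paper does not prove this statement at all: it is imported verbatim as Theorem~4 of Hinz--Lodemann \cite{HL93}, so your sketch can only be measured against their original argument, which is indeed a Selberg-sieve argument of roughly the shape you outline (sieve the ideals of norm at most $x$ in the ray class of $\a$ by prime ideals of norm below $z$, with $z$ near $(x/\norm(\q))^{1/2}$ divided by a power of $\log x$ -- that division is exactly what produces the secondary factor $1+O(\log\log(3x/\norm(\q))/\log(x/\norm(\q)))$). So the strategy is right, but there is a genuine gap in your remainder analysis. With the local count
$$
\#\{\b\in\mathcal{A}:\d\mid\b\}=\frac{x}{h(\q)\,\norm(\d)}+O\big((x/\norm(\d))^{1-1/n_F}\big),
$$
the remainder summed over the $\asymp z^2$ moduli $\d=[\d_1,\d_2]$ is at least of order $x^{1-1/n_F}z^{2/n_F}$, which for $z^2\asymp x/\norm(\q)$ is $\asymp x/\norm(\q)^{1/n_F}$. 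In the imaginary quadratic case relevant to this paper ($n_F=2$, $h(\q)\asymp\phi(\q)\gg\norm(\q)^{1-\epsilon}$) the main term is $\ll x/\norm(\q)^{1-\epsilon}$, so the remainder exceeds it by $\norm(\q)^{1/2-\epsilon}$; the argument already collapses for $\norm(\q)=x^{0.01}$, not merely for $\norm(\q)$ near $x$. Moreover, the fix you propose -- sharpening the individual error $O(x^{1-1/n_F})$ via Hecke $L$-functions -- cannot work: even an error of $O(x^{\epsilon})$ per modulus $\d$, summed over $\asymp x/\norm(\q)$ moduli, still loses a factor $x^{\epsilon}\log x$ against the main term. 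One needs either genuine cancellation in the sum over $\d$ or, what is actually done, an individual error that also saves the full factor $\norm(\q\d)$.

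The missing idea is to sieve \emph{elements} rather than ideals: pass from prime ideals in the ray class to generators $\pi$ lying in a fundamental domain for the unit action (finite units in the imaginary quadratic case), so that the conditions ``$\d\mid(\pi-\alpha_0)$ and $\pi\equiv\alpha_0\ \mymod{\q}$'' define a coset of a lattice of covolume $\asymp\norm(\q\d)$ inside a region of volume $\asymp x$. The lattice-point count then has main term $x/\norm(\q\d)$ up to constants and boundary error $O\big((x/\norm(\q\d))^{1-1/n_F}\big)$ for suitably reduced lattices, and with this input the total remainder for $z=(x/\norm(\q))^{1/2}(\log x)^{-C}$ is $\ll (x/\norm(\q))(\log x)^{1-2C/n_F}$, which is admissible. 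You also need to restore the constants you dropped: the density of ideals in a ray class is $\rho_{\q}/h(\q)$ per unit norm, where $\rho_{\q}$ involves the residue of $\zeta_F$ at $s=1$ and the Euler factors at $\p\mid\q$, and these must cancel against the corresponding factors in $G(z)$. It is worth noting that the paper itself carries out precisely this element-counting device in miniature (the estimates for the number of $\alpha$ with $(\pi_p-1)=m_im_rI(m_s)(\alpha)$ in Sections~4 and~5, and the bound \eqref{NTA-BT}), which is the correct template for closing your gap in the imaginary quadratic setting.
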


\section{Proof of Theorem \ref{main-thm1}}\label{proof-main-thm1}

\subsection{Proof of the first part of Theorem \ref{main-thm1}}

Recall that by the theory of complex multiplication, M.R. Murty \cite[Lemma 4]{MR83} showed that there exists an integral ideal $\mathfrak{f}=\mathfrak{f}_E$ of $\mathcal{O}_K$ such that
$$
K_{\m} \subseteq K(E[m]) \subseteq  K_{\mathfrak{fm}},
$$
where  $K_{\m}$ (resp.,  $K_{\mathfrak{fm}}$) is the ray class field of $K$ of level $\m=m\mathcal{O}_K$ (resp., $\mathfrak{fm}$).  Moreover, as remarked in \cite{AM} (see also  \cite[p, 163]{MR83}), $\mathfrak{f}$ above can be taken as the conductor of the Hecke character associated with $E$, and thus $N_E = \norm(\mathfrak{f})d_K$, where $d_K$ is the absolute discriminant of $K$. In particular, we have $\phi(\mathfrak{f})\le\norm(\mathfrak{f})\le N_E$.

Now, we let $\tilde{D}_{m^2}$ stand for the preimage of $D_{m^2}$ under the quotient map from $\Gal( K_{\mathfrak{fm^2}}/K)$ to $\Gal( K(E[m^2])/K)$. As $\tilde{D}_{m^2}$ is a conjugacy set in $\Gal( K_{\mathfrak{fm^2}}/K)$, we can let $\pi_{\tilde{D}_{m^2}}(x)$ denote the number of unramified primes $\p$ in $ K_{\mathfrak{fm^2}}/K$ such that $\norm(\p)\le x$ and the Artin symbol $\tilde{\sigma}_{\p}$ at $\p$ is contained in $\tilde{D}_{m^2}$. It is known that
\begin{equation}\label{DD}
\pi_{D_{m^2}}(x) =\pi_{\tilde{D}_{m^2}}(x) + O\Big(\frac{\log d_{K_{\mathfrak{fm^2}}}}{[K_{\mathfrak{fm^2}}:K]}\Big),
\end{equation}
where $\pi_{D_{m^2}}(x)$ is defined as in \eqref{def-pi-D} (see, e.g., \cite[p. 268]{MMS88}). Recall that by Hensel's bound (see, e.g., \cite[Lemma 3.4]{Co08}), for any Galois extension $L/K$ of number fields, one has
$$
\log d_L \le [L:K] \log d_K +n_L \sum_p \log p  +n_L \log [L:K],  
$$
where for a number field $F$, $d_F$ and $n_F$ denote its absolute discriminant and degree, respectively, and the sum is over the primes $p$ lying below primes of $K$ that ramify in $L/K$. Thus, applying the above inequality with $L=K_{\mathfrak{fm^2}}$, we have
$$
\frac{\log d_{K_{\mathfrak{fm^2}}}}{[K_{\mathfrak{fm^2}}:K]}
\le \log d_K + n_K\log \norm(\mathfrak{fm^2}) + n_K\log [K_{\mathfrak{fm^2}}:K]
 \le \log d_K + 4\log \norm(\mathfrak{fm^2}) 
$$
as $n_K=2$ and $
[K_{\mathfrak{fm^2}}:  K]\le h(\mathfrak{fm^2}) \le 1\cdot 2^0\cdot \phi(\mathfrak{fm^2})\le \norm(\mathfrak{fm^2})$.
Hence, we arrive at 
\begin{equation}\label{DD-error-bd}
\frac{\log d_{K_{\mathfrak{fm^2}}}}{[K_{\mathfrak{fm^2}}:K]}\ll_K \log \norm(\mathfrak{fm^2}).
\end{equation}
Moreover, by Artin reciprocity (see, e.g., \cite[Ch. 3 and 5]{Ch09}), there are $u(m^2)$ ray classes  $\m_i$ modulo $\mathfrak{fm^2}$ so that
\begin{equation}\label{decomp-pi_D}
\pi_{\tilde{D}_{m^2}}(x)=\sum_{i=1}^{u(m^2)}\pi(x,\mathfrak{fm^2},\m_i ),
\end{equation}
where $u(m^2)\le h(\mathfrak{fm^2}) \le \norm(\mathfrak{fm^2})$.
It then follows from the Chebotarev density theorem and the prime ideal theorem for ray classes that
\begin{equation}\label{quotient-eq}
\frac{|D_{m^2}|}{[K(E[m^2]):K]}=\frac{|\tilde{D}_{m^2}|}{[K_{\mathfrak{fm^2}}:K]}=\frac{u(m^2)}{h(\mathfrak{fm^2})}.
\end{equation}

We are now in a position to estimate $\mathcal{N}(x,y)$. Choosing $y= z/\norm(\mathfrak{f})^{1/4}$, we write
\begin{align}\label{middle3}
 \begin{split}
\mathcal{N}(x,y)
&=\frac{1}{2}\sum_{m\le z/\norm(\mathfrak{f})^{1/4}}\mu(m)\pi_{D_{m^2}}(x) \\
&=\frac{1}{2}\sum_{ m\le z/\norm(\mathfrak{f})^{1/4}}\mu(m)\frac{|D_{m^2}|\Li(x)}{[K(E[m^2]):K]}
+\frac{1}{2}\sum_{ m\le z/\norm(\mathfrak{f})^{1/4}}\mu(m)\mathcal{ E}(x,m^2), 
  \end{split}
\end{align}
where 
$$
\mathcal{ E}(x,m^2)=\pi_{D_{m^2}}(x)-\frac{|D_{m^2}|\Li(x)}{[K(E[m^2]):K]}.
$$
By \eqref{DD}, \eqref{DD-error-bd}, \eqref{decomp-pi_D}, and \eqref{quotient-eq}, we have
\begin{align*}
 \begin{split}
\mathcal{ E}(x,m^2)
&= \pi_{\tilde{D}_{m^2}}(x)-\frac{|\tilde{D}_{m^2}|\Li(x)}{[K_{\mathfrak{fm}^2}:K]} +O(\log \norm(\mathfrak{fm^2}) )\\
&=\sum_{i=1}^{u(m^2)}\Big(\pi(x,\mathfrak{fm^2},\m_i)-\frac{\Li(x)}{h(\mathfrak{fm^2})}\Big)+O(\log \norm(\mathfrak{fm^2}) ).
  \end{split}
\end{align*}
Therefore, by Theorem \ref{Huxley}, together with the discussion beneath Theorem \ref{BVSI-Li}, for $z^4\le x^{1/2-\epsilon}$ and $A>0$,  we have
\begin{align}\label{bound-E}
  \begin{split}
&\sum_{m\le z/\norm(\mathfrak{f})^{1/4}}\big|\mathcal{ E}(x,m^2)\big|\\
&\ll \sum_{\norm(\mathfrak{fm^2})\le z^4}\Big( \sum_{i=1}^{u(m^2)}\Big|\pi(x,\mathfrak{fm^2},\m_i)-\frac{\Li(x)}{h(\mathfrak{fm^2})}\Big| + \log \norm(\mathfrak{fm^2})  \Big)\\
&\ll \sum_{\norm(\mathfrak{fm^2})\le z^4} u(m^2)\max_{1\le i\le u(m^2)} \Big|\pi(x,\mathfrak{fm^2},\m_i)-\frac{\Li(x)}{h(\mathfrak{fm^2})}\Big| + z^4 \log x   \\
&\ll  z^4\sum_{\norm(\mathfrak{q})\le z^4}\max_{(\a,\q)=1}\Big|\pi(x,\q,\a)-\frac{\Li(x)}{h(\q)}\Big| + z^4\log x\\
&\ll_{A} \frac{xz^4}{(\log x)^{5A+12}}
 \end{split}
\end{align}
as $u(m^2)\le \norm(\mathfrak{fm^2})$. Since there are only nine imaginary quadratic fields $K$ of class number one, we can make the implied constant independent of $K$.

By \cite[Eq. (36)]{Co08}, one has
\begin{equation}\label{(36)-Co08}
\mathcal{M}(x,y,2\sqrt{x})\ll \frac{x}{y}(\log x)^3 +\sqrt{x} (\log x)^4.
\end{equation}
Also, by \cite[Eq. (39)]{Co08}, one knows
$$
\sum_{m>y} \frac{\mu(m)|D_{m^2}|}{[K(E[m^2]):K]}\Li(x) 
\ll
 \frac{x \log y}{y \log x}.
$$
As $y= z/\norm(\mathfrak{f})^{1/4}$ and $\norm(\mathfrak{f}) \le N_E$, the above two estimates, together with \eqref{hE-expression}, \eqref{middle3}, and \eqref{bound-E}, then give
\begin{align*}
h_E(x,\Bbb{Q})
&= \frac{1}{2}\sum_{m=1}^{\infty} \frac{\mu(m)|D_{m^2}|}{[K(E[m^2]):K]}\Li(x) + O_{A}\Big( N_E\frac{xy^4}{(\log x)^{5A+12}}\Big)\\
&+O\Big(\frac{x}{y}(\log x)^3 +\sqrt{x} (\log x)^4 + \frac{x \log y}{y \log x} +\frac{\sqrt{x} y}{\log x} +y\log( yN_E) \Big).
\end{align*}
Finally, choosing  $y=(\log x)^{A+3}$, we conclude that for any $A>0$,
$$
h_E(x,\Bbb{Q})= \frac{1}{2}\sum_{m=1}^{\infty} \frac{\mu(m)|D_{m^2}|}{[K(E[m^2]):K]}\Li(x) + O_{A}\Big( N_E \frac{x}{(\log x)^A}\Big),
$$
which, together with the identity 
\begin{equation}\label{deltaE-id}
\frac{1}{2} \sum_{m=1}^{\infty} \frac{\mu(m)|D_{m^2}|}{[K(E[m^2]):K]} =  \frac{1}{2} \sum_{\a,m}\frac{\mu(\a)\mu(m)}{[K(E[\a^2])K(E[m]):K]}
\end{equation}
(see \cite[Eq. (11) and (40)]{Co08}),
yields the claimed asymptotic formula \eqref{main-1}.

\subsection{Proof of the second part of Theorem \ref{main-thm1}}%\label{proof-improved-hQ}

The proof of \eqref{improved-hQ} follows closely  Cojocaru's argument. The main difference lies in bounding $\mathcal{N}_0(x, y , x^{\theta}) $ by Theorem \ref{HLBT}, the  Brun-Titchmarsh inequality for number fields due to Hinz and Lodemann.

We begin by recalling that as $K(E[m^2])/ K$ is an abelian Galois extension, Artin's (holomorphy) conjecture is proven for this case. Applying the effective Chebotarev density theorem established by  M.R. Murty, V.K. Murty, and Saradha \cite{MMS88}, Cojocaru  \cite[Eq. (34)]{Co08} derived that
\begin{equation*}
\mathcal{N}(x,y)
=
\frac{1}{2}\sum_{m\le y} \frac{\mu(m)|D_{m^2}|}{[K(E[m^2]): K]} \Li(x)
+ O( y^2 x^{1/2} \log(N_E x )),
\end{equation*}
for $y\le 2\sqrt{x}$,  under GRH. Hence, by \eqref{deltaE-id}, one knows
\begin{equation}\label{new-est-N}
\mathcal{N}(x,y)
=  \delta_E \Li(x) + O( y^2 x^{1/2} \log(N_E x ))-  \frac{1}{2}\sum_{m>y  } \frac{\mu(m)|D_{m^2}|}{[K(E[m^2]): K]} \Li(x) .
\end{equation}

In \cite[Lemma 3.12]{Co08}, Cojocaru showed that $|D_{q^2}|\le q^2$ for any odd prime $q$ that is unramified in $K$. From this, she deduced in \cite[Corollary 3.13]{Co08}  that $|D_{m^2}|\le m^2$ for any odd positive square-free integer $m$ composed of primes which are unramified in $K$. We remark that this bound can be extended to all square-free integers $m>1$ so that $|D_{m^2}|\ll m^2$ as follows.\footnote{In fact, such an extension has been used implicitly in \cite[Eq. (34) and (39)]{Co08}.} Observe that for any prime $\ell$ that is even or ramifies in $K$, we always have
$$
|D_{\ell^2}|\le |\Gal(K(E[\ell^2])/K )| \ll \ell^4.
$$
As only finitely many primes ramify in $K$, we can write this bound as $|D_{\ell^2}|\ll_K  1$. In addition, since $K$ must be one of nine imaginary quadratic fields of class number one, we deduce that $|D_{\ell^2}|$ is absolutely bounded (i.e.  $|D_{\ell^2}|\ll  1$) for any prime $\ell$ that is even or ramifies in $K$. This, combined with the above-mentioned result of Cojocaru, yields $|D_{m^2}|\ll m^2$ for any square-free integer $m>1$. As a direct consequence, we have
\begin{equation}\label{new-tail-bd}
\sum_{m> y} \frac{\mu(m)|D_{m^2}|}{[K(E[m^2]): K]}
\ll \sum_{m> y} \frac{\mu(m)^2|D_{m^2}|}{[K(E[m^2]): K]}
\ll \sum_{m> y} \frac{m^2}{\phi(m^2)^2 }
= \sum_{m> y} \frac{1}{\phi(m)^2 }
\ll \frac{1}{y},
\end{equation}
where we used the bound $[K(E[k]): K]\gg \phi(k)^2$ for any integer $k \ge 3$ (see, e.g., \cite[Eq. (19)]{Co08}). Hence, the last sum in \eqref{new-est-N} is $\ll \frac{x}{y\log x}$ (cf. \cite[Eq. (39)]{Co08}). 

Now, we shall estimate $\mathcal{N}_0(x, y , x^{\theta}) $ introduced as in \eqref{def-M0} with $y\le x^{\theta}$ and $\theta\in(0,\frac{1}{4})$. To do so,  it suffices to bound
$$
 \sum_{ y< m\le x^{\theta}} \mu(m)^2 \pi_{D_{m^2}}(x) .
$$
It follows from \eqref{DD}, \eqref{DD-error-bd}, and \eqref{decomp-pi_D}, this sum is
$$
\sum_{ y< m\le x^{\theta}}  \mu(m)^2 \sum_{i=1}^{u(m^2)}\pi(x,\mathfrak{fm^2},\m_i )  + O( x^{\theta}\log (N_E x)).
$$
Moreover, applying Theorem \ref{HLBT}, when $N_E\le  x^{1 - 4\theta -\epsilon}$ (so that $\norm( \mathfrak{fm^2} )\le N_E m^4   \le x^{1 - \epsilon} $ for $m\le x^{\theta}$), we can bound the  double sum above by
$$
\ll_\epsilon  \sum_{ y< m\le x^{\theta}} \mu(m)^2  \sum_{i=1}^{u(m^2)} \frac{x}{h( \mathfrak{fm^2} ) \log x}
=   \sum_{ y< m\le x^{\theta}}    \frac{\mu(m)^2|D_{m^2}|}{[K(E[m^2]):K]} \frac{x}{\log x},
$$
 where the  equality is due to \eqref{quotient-eq}. Therefore, by \eqref{new-tail-bd}, we deduce that
$$
\mathcal{N}_0(x, y , x^{\theta})
\ll_\epsilon \frac{x }{y\log x} +x^{\frac{1}{4}}
\ll    \frac{x }{y\log x} 
$$
upon recalling the assumption that $y< x^{\theta}$ with $\theta\in(0,\frac{1}{4})$. As \eqref{(36)-Co08} gives
$$
\mathcal{M}(x, x^{\theta} ,2\sqrt{x})
= \frac{x }{x^\theta} (\log x)^3 + \sqrt{x} (\log x)^4,
$$
we then arrive at
$$
h_E(x,\Bbb{Q}) =\delta_E \Li(x) +
O_\epsilon\Big( y^2 x^{1/2} \log(N_E x ) +  \frac{x}{y \log x} +    \frac{x}{x^\theta} (\log x)^3 + \sqrt{x} (\log x)^4 \Big).
$$
Therefore, choosing $\theta =\frac{1}{6} +\epsilon$ and
$$
y =\frac{x^{1/6}}{ (\log x )^{1/3}(\log (N_E x) )^{1/3}},
$$
we obtain 
$$
h_E(x,\Bbb{Q}) =\delta_E \Li(x) +
O_\epsilon\Big( x^{5/6}  \frac{(\log(N_E x ) )^{1/3} }{ (\log x)^{2/3}}\Big)
$$
whenever $N_E\le  x^{1 - \frac{4}{6} -5\epsilon}$, as desired.

\section{Proof of Theorem \ref{main-thm2}}\label{proof-main-thm2}

We start by setting $y= z/\norm(\mathfrak{f})^{1/4}$ and writing
\begin{align*} %\label{N-SI}
 \begin{split}
&\mathcal{N}(x,h,y)\\ 
&= \frac{1}{2}\sum_{m\le z/\norm(\mathfrak{f})^{1/4}}\mu(m)(\pi_{D_{m^2}}(x+h)-\pi_{D_{m^2}}(x)) \\
&=\frac{1}{2}\sum_{m\le z/\norm(\mathfrak{f})^{1/4}}\mu(m)\frac{|D_{m^2}|(\Li(x+h)-\Li(x))}{[K(E[m^2]):K]}+\frac{1}{2}\sum_{ m\le z/\norm(\mathfrak{f})^{1/4}}\mu(m)\tilde{\mathcal{ E}}(x,m^2), 
  \end{split}
\end{align*}
where 
$$
\tilde{\mathcal{ E}}(x,m^2)=\pi_{D_{m^2}}(x+h)-\pi_{D_{m^2}}(x)-\frac{|D_{m^2}|(\Li(x+h)-\Li(x))}{[K(E[m^2]):K]}.
$$
As argued in Section \ref{proof-main-thm1}, $\tilde{\mathcal{ E}}(x,m^2)$ is equal to
\begin{align*}
 \begin{split}
& \pi_{\tilde{D}_{m^2}}(x+h)-\pi_{\tilde{D}_{m^2}}(x)-\frac{|\tilde{D}_{m^2}|(\Li(x+h)-\Li(x))}{[K_{\mathfrak{fm}^2}:K]} +O(\log \norm(\mathfrak{fm^2}) )\\
&=\sum_{i=1}^{u(m^2)}\Big(\pi(x+h,\mathfrak{fm^2},\m_i)-\pi(x,\mathfrak{fm^2},\m_i)-\frac{\Li(x+h)-\Li(x)}{h(\mathfrak{fm^2})}\Big)+O(\log \norm(\mathfrak{fm^2}) ).
  \end{split}
\end{align*}
Now, for $\delta\in [0,\frac{1}{5})$, $\theta\in(0,\frac{1-5\delta}{10})$, and $z^4\le x^{\theta}$, applying Theorem \ref{BVSI-Li}, for any $A>0$, we derive
\begin{align*}
  \begin{split}
& \sum_{ \norm(\mathfrak{fm^2})\le z^4}\sum_{i=1}^{u(m^2)}\Big|\pi(x+h,\mathfrak{fm^2},\m_i)-\pi(x,\mathfrak{fm^2},\m_i)-\frac{\Li(x+h)-\Li(x)}{h(\mathfrak{fm^2})}\Big| \\
&\ll   z^4\sum_{\norm(\mathfrak{q})\le z^4}\max_{(\a,\q)=1}\Big|\pi(x+h,\q,\a)-\pi(x,\q,\a)-\frac{\Li(x+h)-\Li(x)}{h(\q)}\Big| \\
&\ll_{A} N_E\frac{h y^4}{(\log x)^{5A+13}},
 \end{split}
\end{align*}
whenever $x^{1-\delta}\le h\le x$. Thus, we arrive at
$$
\sum_{ m\le z/\norm(\mathfrak{f})^{1/4}}\big|\tilde{\mathcal{ E}}(x,m^2)\big|\ll_{A}  N_E \frac{hy^4}{(\log x)^{5A+12}}.
$$

As remarked previously, in order to adapt the sieve method developed by  Cojocaru in \cite[Sec. 5]{Co08} (to control $\mathcal{M}(x,y,2\sqrt{x})$), we shall require a number field analogue of the Brun-Titchmarsh inequality as follows. Assume $K=\Bbb{Q}(\sqrt{-D})$. For an ordinary prime $p\nmid 6N_E$, we write  $p\mathcal{O}_K =(\pi_p)(\bar{\pi}_p)$ and  note that $\norm(\pi_p)=p$.  Observe that
\begin{equation}\label{trival-short-ineq}
(\sqrt{x}-1)^2 < (\sqrt{p}-1)^2 \le \norm(\pi_p-1)\le (\sqrt{p}+1)^2  \le (\sqrt{x+h} +1)^2
\end{equation}
for $x< p \le x+h$. Following  Cojocaru \cite{Co08}, we shall use the decomposition $m=m_i m_r m_s$,
where $m_i$ is composed of primes inert in $K$,  $m_r$ is composed of primes ramifying in $K$, and $m_s$ is composed of primes splitting completely in $K$. It follows from  \cite[Lemma 3.17]{Co08} that
for any square-free $m\in \Bbb{N}$ such that $m^2$ divides $|\bar{E}(\Bbb{F}_p)|$ for some ordinary prime $p$, one knows 
\begin{equation}\label{alpha}
(\pi_p-1) = m_i m_r I( m_s)\cdot (\alpha) 
\end{equation} 
for some $\alpha\in \mathcal{O}_K$. Here, $I(k_s)$ is an ideal in $\mathcal{O}_K$ obtained from the product of the ideals $\q\bar{\q}$, $\q^2$, or $\bar{\q}^2$ in accordance to whether $\pi_p$ splits completely in $K(E[\q\bar{\q}])$, $K(E[\q^2])$, or $K(E[\bar{\q}^2])$, respectively, while $\q$ runs over the primes  of $K$ above prime divisors $q$ of $m_s$. (As remarked in \cite[p. 610]{Co08},  there are $3^{\nu(m_s)}$ possibilities of such ideals $I(k_s)$.) Hence, for $x< p\le x+h$, by \eqref{trival-short-ineq} and \eqref{alpha}, we have
$$
 \frac{(\sqrt{x} -1)^2}{m^2} \le\norm(\alpha)= \frac{\norm(\pi_p-1)}{\norm(m_i m_r I( m_s))}\le  \frac{(\sqrt{x+h} +1)^2}{m^2}.
$$
The number of such $\alpha$ is 
$$
\ll\Big(\frac{\sqrt{x+h}- \sqrt{x}+2}{m}+1 \Big)\Big(\frac{\sqrt{x+h}- \sqrt{x}+2}{m\sqrt{D}}+1  \Big)\ll \Big(\frac{\sqrt{x+h}- \sqrt{x}}{m}+1 \Big)^2,
$$
which is
$$
\ll \frac{h}{m^2}+\frac{ \sqrt{x+h}}{m}  
$$
for $ m\le 2\sqrt{x+h}$.
(Here, we used $(\sqrt{x+h}- \sqrt{x})^2= h +2 (x-\sqrt{x+h}\sqrt{x} )\ll h$. Note that if $x/h$ tends to $\infty$ as $x\rightarrow \infty$, we have
$$
\lim_{x\rightarrow\infty}\frac{x-\sqrt{x+h}\sqrt{x}}{h}= \lim_{v\rightarrow\infty} v\Big( 1- \sqrt{1+\frac{1}{v}}\Big) = \lim_{w\rightarrow 0}\frac{1- \sqrt{1+w}}{w}=-\frac{1}{2},
$$
where the last equality is due to  l'H\^{o}pital's rule.)

Processing an argument similar to \cite[Eq. (36)]{Co08}, we have
\begin{align}\label{M-h-mid}
  \begin{split}
\mathcal{M}(x,h,y,2\sqrt{x+h})
&\le \sum_{\substack{y<m\le 2\sqrt{x+h}\\ \text{$m$ square-free}\\ m=m_i m_r m_s}}\sum_{\substack{x< p\le x+h\\ a_p\neq 0\\ (\pi_p -1 )=m_i m_r I( m_s)\cdot(\alpha)}}1\\ 
&\ll \sum_{\substack{y<m\le 2\sqrt{x+h}\\ \text{$m$ square-free}\\ m=m_i m_r m_s}}3^{\nu(m_s)}\Big( \frac{h}{m_i^2 m_r^2 m_s^2} +\frac{\sqrt{x}}{m_i m_r m_s}\Big) \\
&\ll \sum_{m_s\le 2\sqrt{x+h}}\sum_{\frac{y}{m_s}<\tilde{m} \le 2\sqrt{x+h}}3^{\nu(m_s)}\Big( \frac{h}{\tilde{m}^2 m_s^2} +\frac{\sqrt{x}}{\tilde{m}  m_s}\Big)\\
&\ll \sum_{m_s\le 2\sqrt{x+h}} 3^{\nu(m_s)}\Big( \frac{h}{y m_s} +\frac{\sqrt{x}}{  m_s} \log x\Big)\\
&\ll \frac{h}{y}(\log x)^3 +\sqrt{x} (\log x)^4,
 \end{split}
\end{align}
where the last bound follows from the elementary estimates $\sum_{m\le x} \frac{1}{m} =\log x +O(1)$ and
$$
\sum_{m\le x} 3^{\nu(m)}\ll x(\log x)^{2}.
$$
Also, as in \cite[Eq. (39)]{Co08}, it can be checked that
$$
\sum_{m>y} \frac{\mu(m)|D_{m^2}|}{[K(E[m^2]):K]}(\Li(x+h)-\Li(x)) 
\ll \frac{h \log y}{y \log x}.
$$

Now, gathering everything together, we deduce 
\begin{align*}
h_E(x,h,\Bbb{Q})&= \frac{1}{2} \sum_{m=1}^{\infty} \frac{\mu(m)|D_{m^2}|}{[K(E[m^2]):K]}(\Li(x+h)-\Li(x) ) + O_{A}\Big( N_E\frac{hy^4}{(\log x)^{5A+12}}\Big)\\
&+O\Big(\frac{h}{y}(\log x)^3 +\sqrt{x} (\log x)^4 + \frac{h \log y}{y \log x} +\frac{\sqrt{x} y}{\log x} +y\log( yN_E) \Big).
\end{align*}
Finally, choosing  $y=(\log x)^{A+3}$, we conclude that for any $A>0$,
$$
h_E(x,h,\Bbb{Q})= \frac{1}{2} \sum_{m=1}^{\infty} \frac{\mu(m)|D_{m^2}|}{[K(E[m^2]):K]}(\Li(x+h)-\Li(x) )+ O_{A}\Big( N_E \frac{h}{(\log x)^A}\Big)
$$
whenever $x^{1-\delta}\le h\le x$ with $\delta<\frac{1}{5}$. Recalling the identity 
\eqref{deltaE-id}, we conclude the proof.

\section{Proof of Theorem \ref{main-thm-cyclicity}}\label{proof-main-thm3}

Following \cite[Sec. 3]{MR83}, by an application of the inclusion-exclusion principle, one has
$$
\pi_{c}(x,E)=\#\{p \le x \mid \text{$p\nmid N_E$ and $\bar{E}(\Bbb{F}_p)$ is cyclic}\}=\sum_{m=1}^{2\sqrt{x}}\mu(m)\pi_{E}(x,m),
$$
where $\pi_{E}(x,m)$ denotes the number of primes $p \le x$ such that $p\nmid N_E$ splits completely in $\Bbb{Q}(E[m])$.

In \cite[Sec. 4]{Wong20}, the present author used the estimate 
\begin{equation}\label{old-up-bd}
\pi_{E}(x,m)\ll \frac{x}{m^2}
\end{equation}
(for square-free $3\le m\le 2\sqrt{x}$)
to obtain
\begin{align}\label{old-est-cyclicity}
 \sum_{z/\norm(\mathfrak{f})^{1/2}\le m\le 2\sqrt{x+h}} \pi_{E}(x+h,m)-\pi_{E}(x,m)
\ll \sum_{z/\norm(\mathfrak{f})^{1/2}\le m\le 2\sqrt{x+h}}\frac{x}{m^2}
\ll N_E^{1/2} \frac{x}{z}.
\end{align}
The key to proving  Theorem \ref{main-thm-cyclicity} is to improve \eqref{old-up-bd} (and thus \eqref{old-est-cyclicity}) as follows. Observe
\begin{align}\label{mid-step-1}
 \begin{split}
&\pi_{E}(x+h,m)-\pi_{E}(x,m)\\
&\le \#\{\pi_p\in\mathcal{O}_K \mid p\nmid N_E,x< \norm(\pi_p)=p\le x+h, \pi_p\equiv 1\mymod{m\mathcal{O}_K} \} +2.
 \end{split}
\end{align}
(Here, we use the fact that for square-free $m\ge 3$, if $p\nmid 6N_E$ splits completely in $\Bbb{Q}(E[m])$, then $p\mathcal{O}_K =(\pi_p)(\bar{\pi}_p)$ and $\pi_p\equiv 1\mymod{m\mathcal{O}_K}$. See \cite[Lemmata 2.4 and 2.5]{AM}.) 
Now, for $3\le m\le \sqrt{x+h}$, if $x< \norm(\pi_p)=p \le x+h $ and
$$
(\pi_p-1) =  m \cdot(\alpha)
$$
for some $\alpha \in \mathcal{O}_K$, then it follows from \eqref{trival-short-ineq} that
$$
 \frac{(\sqrt{x} -1)^2}{m^2} \le\norm(\alpha)= \frac{\norm(\pi_p-1)}{\norm(m)}\le  \frac{(\sqrt{x+h} +1)^2}{m^2}.
$$
As shown in the previous section,  the number of such $\alpha$ is $\ll \frac{h}{m^2}+\frac{ \sqrt{x+h}}{m}$. Hence, by \eqref{mid-step-1},  for $3\le m\le 2\sqrt{x+h}$, we deduce
\begin{equation}\label{NTA-BT}
\pi_{E}(x+h,m)-\pi_{E}(x,m) \ll \frac{h}{m^2}+\frac{ \sqrt{x+h}}{m}.
\end{equation} 
This leads us to
\begin{align*}
\pi_{c}(x+h,E)-\pi_{c}(x,E)
&= \sum_{1\le m\le z/\norm(\mathfrak{f})^{1/2}}\mu(m) (\pi_{E}(x+h ,m) - \pi_{E}(x ,m) )\\
 &+  O( N_E^{1/2} hz^{-1} + \sqrt{x} \log x  ).
\end{align*}

Now, controlling the range $1\le m\le z/\norm(\mathfrak{f})^{1/2}$ as in \cite[Eq. (4.2)-(4.4)]{Wong20}, we see that $\pi_{c}(x+h,E)-\pi_{c}(x,E)$ equals
\begin{align*}
&\sum_{1\le m\le z/\norm(\mathfrak{f})^{1/2}}\frac{\mu(m)}{n(m)}(\Li(x+h)-\Li(x))\\
&+O\Big(N_E^{1/2} hz^{-1}+ \sqrt{x} \log x +N_E(\log N_E)\frac{h}{(\log x)^A}+\frac{\sqrt{x}z}{\log x} +(\log N_E)z\Big),
\end{align*}
where $n(m)$ denotes the degree of $\Bbb{Q}(E[m])$, and the implied constant depends on $\Bbb{Q}(E[2])$ and $A$, whenever $z^2\le x^\theta$  and $x^{1-\delta}\le h\le x$  with $0\le\theta<\frac{1}{10}(1-5\delta)$ and $0\le \delta< \frac{1}{5}$.

Finally, from the last paragraph of \cite[Sec. 4]{Wong20}, we know
$$
\sum_{ m> z/\norm(\mathfrak{f})^{1/2}}\frac{\mu(m)}{n(m)} (\Li(x+h)-\Li(x))\ll  \frac{N_E^{1/2} h }{z}.
$$
Hence, for any $0\le \delta<\frac{1}{5}$ and $x^{1-\delta}\le h\le x$, balancing the errors, we obtain
\begin{align*}
\pi_{c}(x+h,E)-\pi_{c}(x,E)
=\c_E(\Li(x+h)-\Li(x))+O\Big(N_E(\log N_E)\frac{h}{(\log x)^A}\Big),
\end{align*}
which completes the proof.

\section*{Acknowledgments}
The author would like to thank Professors Wen-Ching Winnie Li and Robert C. Vaughan for their suggestions. He is also very grateful to the referee for the careful reading and constructive comments.

\end{document}